\def\namedlabel#1#2{\begingroup
   \def\@currentlabel{#2}%
   \label{#1}\endgroup
}
\title{Indefinite Einstein metrics on nice Lie groups}
\author{Diego Conti and Federico A. Rossi}
\newtheorem{theorem}{Theorem}[section]
\newtheorem{lemma}[theorem]{Lemma}
\newtheorem{corollary}[theorem]{Corollary}
\newtheorem{proposition}[theorem]{Proposition}
\theoremstyle{definition}
\newtheorem{example}[theorem]{Example}
\theoremstyle{remark}
\newtheorem{remark}[theorem]{Remark}
\newcommand{\abs}[1]{\left\vert#1\right\vert}
\newcommand{\R}{\mathbb{R}}
\newcommand{\lie}[1]{\mathfrak{#1}}     
\newcommand{\g}{\lie{g}}
\newcommand{\Z}{\mathbb{Z}}
\newcommand{\Q}{\mathbb{Q}}
\newcommand{\hook}{\lrcorner\,}
\newcommand{\Gtwo}{\mathrm{G}_2}
\newcommand{\id}{\mathrm{Id}}   
\newcommand{\gl}{\lie{gl}}
\newcommand{\Span}[1]{\operatorname{Span}\left\{#1\right\}}
\newcommand{\tran}[1]{\hspace{.2mm}\prescript{t\hspace{-.5mm}}{}{#1}}
\DeclareMathOperator{\ric}{ric}
\DeclareMathOperator{\Aut}{Aut}
\DeclareMathOperator{\diag}{diag}
\DeclareMathOperator{\logsign}{logsign}
\DeclareMathOperator{\Tr}{tr}
\newcolumntype{C}{>{$}c<{$}}
\newcolumntype{L}{>{$}l<{$}}
\newcolumntype{R}{>{$}r<{$}}
\begin{document}
\VerbatimFootnotes
\maketitle

\begin{abstract}
We introduce a systematic method to produce left-invariant, non-Ricci-flat Einstein metrics of indefinite signature on nice nilpotent Lie groups. On a nice nilpotent Lie group, we give a simple algebraic characterization of non-Ricci-flat left-invariant Einstein metrics in both the class of metrics for which the nice basis is orthogonal and a more general class associated to order two permutations of the nice basis.

We obtain classifications in dimension 8 and, under the assumption that the root matrix is surjective, dimension 9; moreover, we prove that Einstein nilpotent Lie groups of nonzero scalar curvature exist in every dimension $\geq 8$.
\end{abstract}

\renewcommand{\thefootnote}{\fnsymbol{footnote}}
\footnotetext{\emph{MSC 2010}: 53C25; 53C50, 53C30, 22E25.}
\footnotetext{\emph{Keywords}: Einstein pseudoriemannian metrics, nilpotent Lie groups, nice Lie algebras.}
\footnotetext{This work was partially supported by GNSAGA of INdAM and by PRIN $2015$ ``Real and Complex Manifolds: Geometry, Topology and Harmonic Analysis''.}
\renewcommand{\thefootnote}{\arabic{footnote}}

In pseudoriemannian geometry, Einstein metrics are characterized by the condition that the Ricci operator is a multiple of the identity, and they are often regarded as ``optimal'' or ``least curved'' metrics on a fixed manifold, partly due to their variational nature as critical points of the total scalar curvature functional.

The construction of Einstein Riemannian metrics is a classical problem. The homogeneous case has been studied intensively, although a classification is yet to be achieved. Arguably, the simplest nontrivial examples are the isotropy irreducible spaces
classified by \cite{Wolf:TheGeometry}. The case of zero scalar curvature is trivial, as Ricci-flat homogeneous metrics are necessarily flat (see \cite{AlekseevskiKimelFel}). Homogeneous Einstein manifolds with positive scalar curvature are compact; both  necessary and sufficient conditions on a compact homogeneous space for the existence of an Einstein metric are known (see \cite{BohmWangZiller, Bohm:nonexistence}). In the case of negative curvature, all known examples are solvmanifolds, i.e. solvable Lie groups endowed with a left-invariant metric; such a metric can be identified with an inner product on the corresponding Lie algebra, which will also be called a (Riemannian) metric. An outstanding conjecture of Aleksveesky states that all negatively curved Einstein homogeneous metrics are of this type.

The structure of Einstein Riemannian solvmanifolds is well understood: by the work of Heber (\cite{Heber:noncompact}) and Lauret (\cite{Lauret}), their Lie algebra is the orthogonal, semidirect product of the nilradical with an abelian factor, and the restriction to the nilradical gives a nilsoliton metric, meaning that the Ricci operator has the form $\ric=c\id+D$ for some derivation $D$.  In modern language,  $D$ must coincide with the Nikolayevsky or pre-Einstein derivation, namely the semisimple derivation $D$ (unique up to automorphisms) satisfying $\Tr(D\circ\phi)=\Tr \phi$ for all derivations $\phi$; the nilsoliton condition implies that the Nikolayevsky derivation has positive eigenvalues (see  \cite{Nikolayevsky}). Conversely, any  nilsoliton has an Einstein solvable extension defined by the Nikolayevsky derivation.

Thus, the classification of Riemannian Einstein solvmanifolds is reduced to the classification of nilsolitons. Nilsolitons are classified up to dimension $7$ in \cite{Will:RankOne,FernandezCulma:classification2}. A classification up to dimension $8$ was obtained in \cite{KadiogluPayne:Computational} under the assumption that the Nikolayevsky derivation has eigenvalues of multiplicity one. This condition implies that the Lie algebra has a nice basis as defined in \cite{LauretWill:Einstein}; nice bases can be characterized by the condition that all diagonal metrics have diagonal Ricci operator (see \cite{LauretWill:diagonalization}).

\smallskip
This paper is mostly concerned with the indefinite case; from now on, the metrics under consideration will be pseudoriemannian, and the corresponding scalar products will be of arbitrary signature. There are many differences with the positive-definite case; to begin with, indefinite homogeneous Ricci-flat metrics are not necessarily flat. Remaining in the nilpotent context, early examples of Ricci-flat metrics go back to \cite{Petrov:EinsteinSpaces} (see also \cite{Derdzinski}), and as observed in \cite{globke}  any bi-invariant metric on a nilpotent Lie group will be Ricci-flat, though generally not flat. Ricci-flat metrics with holonomy $\Gtwo^*$ were obtained in \cite{FinoLujan:TorsionFreeG22}.

Another important difference is the existence of nilpotent Lie algebras with indefinite Einstein metrics of nonzero scalar curvature. Whilst these are nilsoliton metrics with $D=0$, the standard metric induced on the (trivial) solvable extension defined by $D$  is not Einstein (see also Remark~\ref{remark:solvable_extension}); this suggests that the interplay between indefinite Einstein solvmanifolds and the geometry of nilmanifolds is more complicated compared to the Riemannian case.

Einstein nilmanifolds of nonzero scalar curvature are trickier to construct  than their Ricci-flat counterparts. Indeed, a necessary condition for existence is that the Nikolayevsky derivation is zero (equivalently, all Lie algebra derivations are trace-free; see~\cite{ContiRossi:EinsteinNilpotent}); there are only $11$ Lie algebras of dimension $7$ that satisfy this condition, and none in lower dimensions; even when the condition is satisfied, the computations involved in determining whether an Einstein metric actually exists are extremely complicated. The first examples, of dimension $8$, were  obtained in \cite{ContiRossi:EinsteinNilpotent} by considering a particular Lie algebra with a high degree of symmetry.

\smallskip
In this paper we illustrate a new, systematic method to construct Einstein nilmanifolds with nonzero scalar curvature, obtained as compact quotients $\Gamma\backslash G$ of nilpotent Lie groups with a left-invariant metric. The nilpotent Lie algebras we consider are those that admit a nice basis in the sense of \cite{LauretWill:Einstein}; this condition, although apparently quite special, is satisfied by most nilpotent Lie algebras, at least in low dimensions (see \cite[Section 3]{ContiRossi:Construction}).

Nice nilpotent Lie algebras of dimension $\leq 9$ are classified in \cite{ContiRossi:Construction} up to a natural notion of equivalence. The classification uses an algorithm based on a graph $\Delta$ and a matrix $M_\Delta$ associated to each nice Lie algebra, called its nice diagram and root matrix. The root matrix is a well-known object that reflects metric properties of diagonal metrics; for instance,  nilsoliton metrics can be expressed in terms of a linear system involving $M_\Delta$ and the structure constants (see \cite[Theorem 1]{Payne:ExistenceOfSolitonMetrics}). Nice diagrams were primarily introduced for classification purposes, but they also have a use in describing the symmetries in the structure of the nice Lie algebra.

We obtain a useful characterization of  diagonal Einstein metrics with nonzero scalar curvature on a fixed nice nilpotent Lie algebra in terms of the root matrix; indeed, such metrics can be identified with particular solutions to the linear system $\tran{X}M_\Delta=(1,\dots, 1)$; the existence of solutions is equivalent to a known necessary condition, namely the vanishing of the Nikolayevsky derivation (Lemma~\ref{lemma:TracelessDerivations}). A solution $X$ determines an actual Einstein metric if and only if it satisfies a polynomial condition relating $X$, the structure constants and the group of diagonal automorphisms of the  Lie algebra (see Theorem~\ref{thm:sigmacompatible}).

In the special case that the root matrix is surjective when viewed as a matrix with coefficients in $\Z_2$, the second condition is equivalent to the unique solution $X$  not being contained in any coordinate hyperplane: thus, the existence of a diagonal  Einstein metric can be determined purely considering a linear system (Theorem~\ref{thm:Sufficient_Condition}). Moreover, when the solution $X$ is contained in a coordinate hyperplane a diagonal Einstein metric can be found on a contraction limit (Proposition~\ref{prop:surjective_type_contraction_limit}). As a consequence, we find that any nice Lie algebra with invertible root matrix over $\Z_2$ has an Einstein contraction limit with nonzero scalar curvature (Corollary~\ref{cor:invertible}). We use this fact to show that Einstein nilmanifolds of nonzero scalar curvature exist in any dimension $\geq8$ (Theorem~\ref{thm:existinalldimensions}). Note that surjectivity of $M_\Delta$ over $\R$ is equivalent to  the Gram matrix $M_\Delta \tran{M_\Delta}$ being invertible; this condition has been considered in \cite{KadiogluPayne:Computational}.

Using the classification of \cite{ContiRossi:Construction}, we classify diagonal Einstein metrics of nonzero scalar curvature on nice nilpotent Lie algebras of dimension $8$ (Theorem~\ref{thm:EinsteinMetrics8}), and those with surjective root matrix in dimension 9 (Theorem~\ref{thm:EinsteinMetrics9}).

More generally, we consider a class of indefinite metrics associated to an involution $\sigma$ of the nice diagram $\Delta$; these metrics generalize diagonal metrics in the sense that the Ricci operator is diagonal. Our characterization of diagonal Einstein metrics of nonzero scalar curvature extends to this more general class (see Theorem~\ref{thm:sigmacompatible}); we obtain classifications analogous to those given in the diagonal case (see Theorems~\ref{thm:EinsteinMetrics8} and~\ref{thm:EinsteinMetrics9}).

All the nilpotent Lie algebras appearing in our classification have rational structure constants with respect to an appropriately chosen nice basis; by \cite{Malcev}, each corresponding Lie group $G$ admits a compact quotient $\Gamma\backslash G$, on which an  Einstein metric is induced.

\section{Nice Lie algebras and nice diagrams}
\label{sec:diagrams}
In this section we recall the definition of nice Lie algebra and the basic related facts; we refer to \cite{ContiRossi:Construction} for further details.

On a Lie algebra $\g$, a basis  $\{e_1,\dotsc, e_n\}$ with dual basis $\{e^1,\dotsc, e^n\}$ is called \emph{nice} if each $[e_i,e_j]$ is a multiple of a single basis element $e_k$ depending on $i,j$, and each $e_i\hook de^j$ is a multiple of a single $e^h$, depending on $i,j$. A \emph{nice nilpotent Lie algebra} consists of a real nilpotent Lie algebra with a fixed nice basis. Two nice nilpotent Lie algebras are regarded as \emph{equivalent} if there is an isomorphism that maps basis elements to multiples of basis elements. Up to equivalence, nice nilpotent Lie algebras of dimension $n$ can be viewed as particular elements of $\Lambda^2(\R^n)^*\otimes\R^n$, corresponding to nilpotent Lie algebra structures on $\R^n$ for which the standard basis is nice; the group $\Sigma_n$ of permutations in $n$ letters and the group of diagonal real matrices $D_n$ act naturally as equivalences.

To each nice nilpotent Lie algebra we can associate a directed graph $\Delta$ as follows:
\begin{itemize}
\item the nodes of $\Delta$ are the elements of the nice basis;
\item there is an arrow from $e_i$ to $e_j$ if $e_j$ is a nonzero multiple of some  $[e_i,e_h]$, i.e. $e_i\hook de^j\neq0$;
\item each arrow is endowed with a label that belongs to the set of nodes; if $e_j$ is a nonzero multiple of some  $[e_i,e_h]$, then the arrow from $e_i$ to $e_j$ has label $e_h$. By matter of notation, we will write $i\xrightarrow{h}j$.
\end{itemize}
The resulting graph $\Delta$ is called a \emph{nice diagram}; by construction, it is acyclic, and any two arrows can have at most one of source, destination and label in common (for the full list of conditions, we refer to \cite{ContiRossi:Construction}).  An \emph{isomorphism} of nice diagrams is an isomorphism of graphs, i.e. a pair of compatible bijections between nodes and arrows that also preserve labels. We will denote by $\Aut(\Delta)$ the group of automorphisms of a nice diagram; note that by construction $\Aut(\Delta)$ is a subgroup of $\Sigma_n$. It is clear that equivalent nice Lie algebras determine isomorphic nice diagrams.

The correspondence between nice nilpotent Lie algebras and nice diagrams is not one to one. Indeed, inequivalent nice nilpotent Lie algebras with the same nice diagram can appear in continuous families (though inequivalent nice bases on a fixed nilpotent Lie algebra can only appear in discrete families, see \cite[Corollary 3.7]{ContiRossi:Construction}). Nice Lie algebras associated to a nice diagram $\Delta$ are parametrized as follows.

Let $\mathcal{I}_\Delta$ be the set of the $I=\{\{i,j\},k\}$ such that $i\xrightarrow{j}k$; we shall write
\[E_I=e^{ij}\otimes e_k, \quad I=\{\{i,j\},k\},\ i<j,\]
where $e^{ij}$ stands for $e^i\wedge e^j$. Take the $D_n$-representation $V_\Delta$ freely generated by the $E_I$, $I\in\mathcal{I}_\Delta$. An actual Lie algebra structure on $\R^n$ is determined by an element
\[c=\sum_{I\in \mathcal{I}_\Delta} c_IE_I,\]
encoding the structure constants. Whenever $I=\{\{i,j\},k\}$, we shall write
\[c_{ijk}=\begin{cases}
c_I,& i<j\\ 0 & i> j
          \end{cases};
\]
explicitly, the Lie algebra structure satisfies
\[[e_i,e_j]=c_{ijk} e_k.\]
Here and in the sequel, we will use the Einstein convention for summation over repeated indices $i,j,k$; however, in this case the nice condition implies that the sum has a single nonzero term.

Equivalence classes of nice nilpotent Lie algebras with diagram $\Delta$ are para\-metrized by elements of
\[V_\Delta / (D_n\ltimes\Aut(\Delta)).\]

Each nice diagram $\Delta$ has an associated \emph{root matrix} defined as follows. Define a total order on $\mathcal{I}_\Delta$ by
 \[\{\{i,j\},k\}< \{\{l,m\},h\} \iff (k < h) \vee( k=h\wedge i<l),\quad i<j, l<m.\]
Notice that the two elements of $\mathcal{I}_\Delta$ coincide when $k=h$ and $i=l$ by the nice condition.  Identifying the vectors $e^1,\dotsc,e^n$ of $(\R^n)^*$ with row vectors of size $n$, we can associate to each $\{\{i,j\},k\}$ the row vector $-e^i-e^j+e^k$. The root matrix $M_\Delta$ is the $m\times n$ matrix with rows determined by the elements of $\mathcal{I}_\Delta$ taken in increasing order.

Our choice of sign in the definition of $M_\Delta$ is motivated by the fact that rows of the root matrix represent the weights for the action of $D_n$ on $E_I$, i.e.
\[\alpha_I(\lambda_1e^1\otimes e_1+\dots + \lambda_ne^n\otimes e_n)=-\lambda_i-\lambda_j+\lambda_k, \quad I=\{\{i,j\},k\}.\]

 We can associate to $M_\Delta$ a Lie algebra homomorphism $M_\Delta^D\colon d_n\to d_m$. This homomorphism realizes the correspondence between the natural action of $D_n$ on $V_\Delta\subset\Lambda^2T^*\otimes T$ and the action of $D_m$ via the diagram
\begin{equation}
\label{eqn:diag_exp_M_delta}
\xymatrix{
d_n\ar[d]_\exp \ar[r]^{M_\Delta^D} & d_m\ar[d]^\exp \\
D_n \ar[r]^{e^{M_\Delta}} & D_m
}
\end{equation}
An element of $d_n$ is a Lie algebra derivation when it maps each $E_I$, $I\in\mathcal{I}_\Delta$ to zero. Thus, $\ker M_\Delta^D$ is the space of diagonal derivations; the group of equivalences is
\[H=\ker e^{M_\Delta}.\]

The  isomorphism between the group of invertible integers $\Z^*$ and $\Z_2$ can be extended to a function
\[\logsign \colon\R^*\to \Z_2, \quad \logsign(x)=\begin{cases}
                                                  0 & x>0 \\
                                                  1 & x<0
                                                 \end{cases},
\]
where the notation is justified by the identity
\[x=e^{\log\abs{x}} (-1)^{\logsign{x}}.\]
With this language, an element $\epsilon=(\epsilon_1,\dotsc, \epsilon_n)$ of $((\Z^*)^n)$ lies in $H$ if and only if 
\[\logsign\epsilon=(\logsign\epsilon_1,\dotsc, \logsign\epsilon_n)\]
lies in the kernel of the matrix $M_{\Delta,2}$ obtained by projecting the (integer) entries of $M_\Delta$ to $\Z_2$.
\begin{remark}
There is a different well-known method to attach a nice nilpotent Lie algebra to an undirected graph, where each node and edge define an element of the nice basis, and nonzero Lie brackets correspond to pairs of nodes connected by an edge (see e.g. \cite{LauretWill:Einstein}). The resulting Lie algebra is always two-step, and therefore admits no Einstein metric of nonzero scalar curvature (see~\cite{ContiRossi:EinsteinNilpotent}).
\end{remark}

\section{Construction of Einstein metrics}
\label{sec:einstein}
We are interested in left-invariant pseudoriemannian metrics on a nice nilpotent Lie group $G$; such metrics are one-in-to-one correspondence with scalar products (i.e. symmetric, nondegenerate bilinear forms) on the corresponding Lie algebra~$\g$.

The Ricci operator of the pseudoriemannian metric, as a linear map $\ric\colon\g\to\g$, can  be expressed in terms of the scalar product and  structure constants as follows. Consider the natural contractions
\[\langle\,, \rangle\colon (\Lambda^2\g^*\otimes \g)\otimes  (\Lambda^2\g \otimes \g^*)\to\R,\qquad \langle\,, \rangle\colon (\g^*\otimes \g)\otimes (\g^*\otimes \g)\to\R,\]
defined independently of the metric. Let $c=c_IE_I\in V_\Delta$; by \cite{ContiRossi:EinsteinNilpotent}, the contraction of the Ricci operator with  $A\in\g^*\otimes \g$ satisfies
\begin{equation}
 \label{eqn:jensen_ricci}
\langle \ric,A\rangle=\frac12 \langle A c, q(c)\rangle,
\end{equation}
where
\[q\colon\Lambda^2\g^*\otimes\g\to \Lambda^2\g\otimes \g^*, \qquad q(\alpha\wedge\beta\otimes v)=\alpha^\sharp\wedge \beta^\sharp\otimes v^\flat.\]
Notice that the coefficient $\frac12$, as opposed to the coefficient $\frac14$ present in  \cite[Theorem 2.8]{ContiRossi:EinsteinNilpotent}, stems from the equality
\[1=\langle e^i\wedge e^j, e_i\wedge e_j\rangle =\frac12\langle e^i\otimes e^j - e^j\otimes e^i,e_i\otimes e_j-e_j\otimes e_i\rangle.\]
Explicitly, the components of the Ricci operator are determined by the structures constants $c_{ijk}$, the metric tensor $g_{ij}$ and its inverse $g^{ij}$ by
\[
\ric_h^k=\frac14   g^{im}g^{ln}g_{hp}c_{ilk}c_{mnp} -\frac12 g^{km}g^{jn}g_{ip}c_{hji}c_{mnp}.
\]

The Einstein condition means that the Ricci operator is a multiple of the identity; we are interested in the case where the Ricci operator is nonzero. Up to a normalization, Einstein metrics with positive scalar curvature can be identified with solutions of
\begin{equation}
\tag{E}
 \label{eqn:normalized_einstein}
 \ric=\frac12\id.
\end{equation}
Einstein metrics of negative scalar curvature are obtained by changing the sign of the metric, with the effect of reversing the signature. We proved in  \cite{ContiRossi:EinsteinNilpotent} that metrics satisfying \eqref{eqn:normalized_einstein} are obstructed by derivations of nonzero trace; in this section we show that this condition depends only on the underlying diagram. Restricting to the case of diagonal metrics, we also give two other necessary conditions for the existence of a diagonal metric satisfying \eqref{eqn:normalized_einstein}. We also give sufficient conditions that apply to situations in which the number of nonzero brackets is small relative to the dimension.

We will denote by $[1]$ any column vector with all entries equal to $1$; the number of components will be implied by the context.
\begin{lemma}
\label{lemma:TracelessDerivations}
Let $\g$ be a nice Lie algebra with diagram $\Delta$. Then all derivations of $\g$ are traceless if and only if $[1]$ is in the space spanned by the columns of $\tran M_\Delta$.
\end{lemma}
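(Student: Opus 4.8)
The plan is to exploit the defining property of the Nikolayevsky (pre-Einstein) derivation together with the explicit description of diagonal derivations via $\ker M_\Delta^D$. Recall from the discussion around \eqref{eqn:diag_exp_M_delta} that an element of $d_n$ — that is, a diagonal endomorphism $\diag(\lambda_1,\dots,\lambda_n)$ — is a derivation precisely when it lies in $\ker M_\Delta^D$, and that $M_\Delta^D$ is the linear map $\R^n\to\R^m$ whose matrix is $M_\Delta$. So the space of \emph{diagonal} derivations is canonically identified with $\ker M_\Delta\subseteq\R^n$, and on such a derivation the trace is simply $\lambda_1+\dots+\lambda_n=[1]^tM_\Delta^{-1}$… more precisely, it is the pairing of $(\lambda_1,\dots,\lambda_n)$ with the all-ones vector $[1]$.

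First I would reduce the statement about all derivations to a statement about diagonal derivations. The key point is that the set of derivations of $\g$ always contains a semisimple derivation whose semisimple part, after conjugation by an automorphism, is diagonal in the nice basis; concretely, the Nikolayevsky derivation $D$ of a nice Lie algebra is itself diagonal in the nice basis (this is essentially why nice bases are relevant, and it is implicit in the cited work \cite{Nikolayevsky}, \cite{ContiRossi:EinsteinNilpotent}). More to the point: the trace form $\phi\mapsto\Tr\phi$ on $\Der\g$ vanishes identically if and only if it vanishes on the semisimple derivation $D$ characterized by $\Tr(D\circ\phi)=\Tr\phi$ for all $\phi$, since taking $\phi=D$ gives $\Tr D=\Tr(D\circ D)=\Tr(D^2)\geq 0$ with equality iff $D=0$, and $D=0$ iff $\Tr\phi=0$ for all $\phi$. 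Thus ``all derivations are traceless'' $\iff$ ``the Nikolayevsky derivation is zero''. Now, the Nikolayevsky derivation of a nice Lie algebra can be chosen diagonal, so it lies in $\ker M_\Delta$; and the condition $\Tr D=0$ for the specific diagonal $D$ solving the pre-Einstein equations translates, via the root-matrix description of diagonal derivations, into an orthogonality condition between $\ker M_\Delta$ and $[1]$.

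The clean linear-algebra core is this: \emph{every} diagonal derivation is traceless $\iff$ $[1]\perp\ker M_\Delta$ $\iff$ $[1]\in(\ker M_\Delta)^\perp=\operatorname{Im}\tran M_\Delta=$ the column span of $\tran M_\Delta$. So I would prove the equivalence ``all derivations traceless $\iff$ all diagonal derivations traceless'' and then invoke this elementary fact. For the forward direction ``all derivations traceless $\Rightarrow$ all diagonal ones traceless'' is trivial. For the converse, suppose all diagonal derivations are traceless; then $[1]\in\operatorname{Im}\tran M_\Delta$, so there is $X$ with $\tran X M_\Delta=[1]^t$, equivalently $\tran M_\Delta X=[1]$; I claim this forces the Nikolayevsky derivation to be zero. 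Indeed, the pre-Einstein derivation $D=\diag(\lambda_1,\dots,\lambda_n)$ (diagonal in the nice basis) must in particular satisfy $\Tr(D\circ\phi)=\Tr\phi$ for all diagonal derivations $\phi$, i.e. $\langle(\lambda_i),v\rangle=\langle[1],v\rangle$ for all $v\in\ker M_\Delta$; since $[1]\perp\ker M_\Delta$ this says the projection of $(\lambda_i)$ onto $\ker M_\Delta$ is zero, so $(\lambda_i)\in(\ker M_\Delta)^\perp=\operatorname{Im}\tran M_\Delta$. But $D\in\ker M_\Delta$ as well (it is a diagonal derivation), so $(\lambda_i)\in\ker M_\Delta\cap(\ker M_\Delta)^\perp=0$, whence $D=0$ and all derivations are traceless.

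The step I expect to require the most care is the reduction from ``all derivations'' to ``all diagonal derivations'', i.e. justifying that one may replace the full derivation algebra by its diagonal part without losing trace information — this rests on the fact that the Nikolayevsky derivation of a nice Lie algebra may be taken diagonal in the nice basis, which I would either cite from \cite{ContiRossi:EinsteinNilpotent} or re-derive from the uniqueness-up-to-automorphism of the pre-Einstein derivation together with the $D_n$-action preserving the nice basis. Everything else is the routine identity $(\ker M_\Delta)^\perp=\operatorname{Im}\tran M_\Delta$ and the sign-definiteness of $\Tr(D^2)$.
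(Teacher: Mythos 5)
Your argument is correct, and the linear-algebra core is exactly the paper's: diagonal derivations are identified with $\ker M_\Delta$, tracelessness of all of them means $[1]\perp\ker M_\Delta$, and $(\ker M_\Delta)^\perp=\operatorname{Im}\tran M_\Delta$ is the column span of $\tran M_\Delta$. Where you genuinely diverge is in the reduction from all derivations to diagonal ones. The paper does this in one line: by the nice condition the diagonal part of any derivation is again a derivation, and since $\Tr\phi$ equals the trace of the diagonal part of $\phi$, vanishing of the trace on diagonal derivations forces it on all derivations. You instead route through the Nikolayevsky derivation $D$: you show $\Tr\phi\equiv0$ iff $D=0$ via $\Tr D=\Tr(D^2)\geq0$, and then use that $D$ is diagonal to place it in $\ker M_\Delta\cap(\ker M_\Delta)^\perp=0$. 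This works, but it is a heavier detour, and the one fact it hinges on --- that the pre-Einstein derivation of a nice Lie algebra may be taken diagonal in the nice basis --- is not obtainable the way you sketch it: uniqueness up to automorphism plus the $D_n$-action does not by itself diagonalize $D$. The standard proof of that diagonality is precisely the paper's ingredient (the projection of $\Der\g$ onto its diagonal part preserves both the derivation property and all the relevant traces, so the diagonal part of a pre-Einstein derivation is again pre-Einstein), so your detour ultimately rests on the same lemma the paper invokes directly; citing the diagonality as a known fact, as you propose, is legitimate but gains nothing over the direct reduction. (The stray expression $[1]^t M_\Delta^{-1}$ is meaningless since $M_\Delta$ is generally not square, but you immediately replace it with the correct pairing, so it does not affect the argument.)
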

\begin{proof}
It is a consequence of the nice condition that the diagonal part of a derivation is again a derivation (see \cite[Section 4]{Nikolayevsky} and \cite[Corollary~3.15]{LauretDere:OnRicciNegative}); thus, it suffices to prove the statement for diagonal derivations.

A matrix $\lambda_i e^i\otimes e_i$ in $d_n$ is a derivation when it is annihilated by each weight $\alpha_I$ (see e.g. \cite{Payne:ExistenceOfSolitonMetrics}). Let $\alpha_{\Tr}$ be the weight of the one-dimensional representation $\Lambda^n\R^n$, i.e.
\[\alpha_{\Tr}(\lambda_i e^i\otimes e_i)=\lambda_1+\dotsc + \lambda_n.\]
The condition that diagonal derivations are traceless is equivalent to
\[\ker
\begin{pmatrix}
\alpha_{I_1}\\
\vdots\\
\alpha_{I_m}
\end{pmatrix}=\ker
\begin{pmatrix}
\alpha_{I_1}\\
\vdots\\
\alpha_{I_m}\\
\alpha_{\Tr}
\end{pmatrix};\]
clearly, this is true when $\alpha_{\Tr}$ is a linear combination of the $\alpha_I$. Written in the basis $\{e^1\otimes e_1,\dots,e^n\otimes e_n\}$, this condition means that $\tran{[1]}$ is in the span of the rows of the root matrix, i.e. $[1]$ is in the span of its columns.
\end{proof}
\begin{remark}
\label{remark:solvable_extension}
Recall from \cite{Nikolayevsky} that every Lie algebra $\g$ admits a semisimple derivation  $\phi$ with real eigenvalues such that
\[\Tr(\phi\circ\psi)=\Tr \psi, \quad \psi\in \operatorname{Der}(\g);\]
this derivation is unique up to automorphisms and it is known as the \emph{Nikolayevsky} (or \emph{pre-Einstein}) \emph{derivation}.

If derivations are traceless then $\phi=0$ satisfies the defining condition for the Nikolayevsky derivation, and conversely, $\phi=0$ implies that all derivations are traceless. Therefore, the obstruction of Lemma~\ref{lemma:TracelessDerivations} is equivalent to the vanishing of the Nikolayevsky derivation.

It follows that, given a nilpotent Lie algebra $\g$ with a metric satisfying \eqref{eqn:normalized_einstein}, its solvable extension defined by the Nikolayevsky derivation admits no metric satisfying \eqref{eqn:normalized_einstein}. Indeed, this extension is a product $\g\times\R$, which is unimodular with Killing form zero. Since the center of $\g\times\R$ is not contained in its derived algebra, the existence of Einstein metrics with nonzero scalar curvature is ruled out by \cite[Lemma 2.2]{ContiRossi:EinsteinNilpotent}.
\end{remark}

Recall from \cite{LauretWill:Einstein} that diagonal metrics on a nice nilpotent Lie algebra have a diagonal Ricci tensor, i.e. the elements of the nice basis are eigenvectors. This fact extends to a more general class of metrics. Indeed, let $\Delta$ be a nice diagram and let $\sigma\in\Sigma_n$ be a permutation of the set of nodes such that $\sigma^2=\id$. We shall say that a metric on $\g$ is \emph{$\sigma$-diagonal} if the metric tensor has the form
\[g=g_i e^i\otimes e^{\sigma_i},\]
so that $e_i^\flat=g_ie^{\sigma_i}$ and $(e^i)^\sharp=\frac1{g_i} e_{\sigma_i}$.
If $\sigma$ is the identity, this condition means that the metric is diagonal. In this paper, we will only consider the case where  $\sigma$ is in $\Aut(\Delta)$, i.e. $\sigma_i\xrightarrow{\sigma_j}\sigma_k$ whenever $i\xrightarrow{j}k$. An element of $\Aut(\Delta)$ that squares to the identity will be called a \emph{diagram involution}.

For a fixed diagram involution $\sigma$, there are two relevant groups of symmetries acting on the space of $\sigma$-diagonal metrics. The first is the natural action of $D_n$ on $\g^*\otimes \g^*$ induced by the action on $\g$, which reflects the natural ambiguity in the choice of a nice basis; the subgroup of $D_n$ that preserves the Lie algebra structure is $\ker e^{M_\Delta}$. To introduce the second, we denote by $\Z^*=\{\pm1\}$ the group of invertible integers; then $\sigma$ acts on $(\Z^*)^n$ and its fixed point set is a group $((\Z^*)^n)^\sigma$. We consider the action of $((\Z^*)^n)^\sigma$ on the space of $\sigma$-diagonal metrics given by
\[ (\epsilon_1,\dotsc, \epsilon_n)(g_1e^1\otimes e^{\sigma_1}+\dots + g_ne^n\otimes e^{\sigma_n})=\epsilon_1g_1e^1\otimes e^{\sigma_1}+\dots +\epsilon_ng_ne^n\otimes e^{\sigma_n};\]
notice that this is not the (trivial) action obtained by the natural action of $D_n$ by restriction. By restriction, we obtain an action of
\[G_\sigma= ((\Z^*)^n)^\sigma\cap  H.\]
We will see that the combined action of $G_\sigma\times H$ preserves \eqref{eqn:normalized_einstein} (see Theorem~\ref{thm:sigmacompatible}).

An automorphism $\sigma$ of $\Delta$ induces naturally an isomorphism $\tilde\sigma\colon V_\Delta\to V_\Delta$, i.e. $\tilde\sigma(e^{ij}\otimes e_k)=e^{\sigma_i,\sigma_j}\otimes e_{\sigma_k}$. Given an element of $V_\Delta$ with components $c_I$, we will denote by $\tilde c_I$ the components of its image, i.e.
\[\sum \tilde c_I E_I =\tilde \sigma(\sum c_IE_I).\]

\begin{theorem}
\label{thm:sigmacompatible}
Let $\Delta$ be a nice diagram, let $c\in V_\Delta$ define a Lie algebra structure and let $\sigma$ be a diagram involution. The Lie algebra admits a $\sigma$-diagonal metric satisfying \eqref{eqn:normalized_einstein} if and only if
\begin{enumerate}[label=(\roman*)]
\item\label{thm:sigmacompatible:cond:unoeqlineare}
$\tran M_\Delta X =[1]$
for some $X=(x_I)\in \R^{\mathcal{I}_\Delta}$ ;
\item\label{thm:sigmacompatible:cond:dueimmagine} $\left(\dfrac{x_I}{c_I\tilde{c}_I}\right)=e^{M_\Delta}(g)$ for some $g\in D_n$;
\item\label{thm:sigmacompatible:cond:tresigma} $\sigma g=g$.
\end{enumerate}
If conditions~\ref{thm:sigmacompatible:cond:unoeqlineare}--\ref{thm:sigmacompatible:cond:tresigma} hold, any metric in the $G_\sigma\times H$-orbit of
\[g_i e^i\otimes e^{\sigma_i},\quad g=\diag(g_1,\dotsc, g_n)\]
satisfies \eqref{eqn:normalized_einstein}; the $G_\sigma\times H$-orbit is determined uniquely by $X$.
\end{theorem}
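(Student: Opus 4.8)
The plan is to compute the Ricci operator of a $\sigma$-diagonal metric explicitly and read off the three conditions. Fix a diagram involution $\sigma$ and a $\sigma$-diagonal metric $g_ie^i\otimes e^{\sigma_i}$. Since a scalar product is symmetric one has $g_i=g_{\sigma_i}$, which is precisely condition~\ref{thm:sigmacompatible:cond:tresigma} (written there as $\sigma g=g$), and consequently $g_{ij}=g_i\delta_{j,\sigma_i}$, $g^{ij}=g_i^{-1}\delta_{j,\sigma_i}$. First I would substitute this into the formula for the components $\ric^k_h$ given after~\eqref{eqn:jensen_ricci}. In each of its two summands the surviving product of structure constants has the form $c_{\,\cdot\,\cdot\, k}\,c_{\sigma_{\cdot}\sigma_{\cdot}\sigma_h}$, pairing an arrow of $\Delta$ with the $\sigma$-image of another arrow; since $\sigma\in\Aut(\Delta)$ the two arrows share either source and label (first summand) or destination and label (second summand), hence coincide by the nice condition, forcing $h=k$. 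Thus $\ric$ is diagonal, extending the observation recalled from \cite{LauretWill:Einstein} before the statement.

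Next I would recombine the diagonal entries. Writing $\tilde c_I$ for the structure constants of $\tilde\sigma(c)$ (so the surviving products are exactly $c_I\tilde c_I$) and recalling that the $I$-th row of $M_\Delta$ is the weight $-e^i-e^j+e^k$ of $E_I$, one has $g_k/(g_ig_j)=\bigl(e^{M_\Delta}\diag(g)\bigr)_I$, and collecting the two summands should give
\[\ric^k_k=\tfrac12\bigl(\tran M_\Delta\,Y\bigr)_k,\qquad Y=(y_I),\quad y_I=c_I\,\tilde c_I\,\bigl(e^{M_\Delta}\diag(g)\bigr)_I,\]
the mechanism being that $E_I$ contributes to the coordinate directions $e_i,e_j,e_k$ with the signs $-1,-1,+1$ recorded in the $I$-th row of $M_\Delta$. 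Granting this, and since the off-diagonal entries already vanish, $g_ie^i\otimes e^{\sigma_i}$ satisfies~\eqref{eqn:normalized_einstein} if and only if $\tran M_\Delta Y=[1]$, which is~\ref{thm:sigmacompatible:cond:unoeqlineare} with $X=Y$, while the defining relation for $Y$ is exactly $\bigl(x_I/(c_I\tilde c_I)\bigr)=e^{M_\Delta}(\diag g)$, i.e.~\ref{thm:sigmacompatible:cond:dueimmagine}; here $c_I\neq0$ for all $I$ because $c$ has diagram $\Delta$, and then $\tilde c_I=c_{\sigma(I)}\neq0$, so the divisions make sense. Conversely, given $X$ and $g$ as in~\ref{thm:sigmacompatible:cond:unoeqlineare}--\ref{thm:sigmacompatible:cond:tresigma}, the tensor $g_ie^i\otimes e^{\sigma_i}$ is nondegenerate ($g\in D_n$) and symmetric (by~\ref{thm:sigmacompatible:cond:tresigma}), and the displayed identity with $Y=X$ gives $\ric=\tfrac12\id$.

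For the last part of the statement I would argue as follows. The natural action of $d\in H=\ker e^{M_\Delta}$ on a $\sigma$-diagonal metric replaces $g_i$ by $d_i^{-1}d_{\sigma_i}^{-1}g_i$; using $(M_\Delta)_{\sigma(I),a}=(M_\Delta)_{I,\sigma_a}$ one checks that $\diag(d_i^{-1}d_{\sigma_i}^{-1})\in H$, so $e^{M_\Delta}\diag(g)$, hence $Y$, hence $\ric$, is unchanged. The non-standard action of $\epsilon\in G_\sigma=((\Z^*)^n)^\sigma\cap H$ replaces $g_i$ by $\epsilon_ig_i$; since $\epsilon\in H$ this again leaves $e^{M_\Delta}\diag(g)$, hence $\ric$, unchanged, while $\epsilon\in((\Z^*)^n)^\sigma$ is exactly what keeps the tensor symmetric. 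The two actions commute (scalar factors), so $G_\sigma\times H$ preserves~\eqref{eqn:normalized_einstein}. For the uniqueness, note that $X$ determines $e^{M_\Delta}\diag(g)=\bigl(x_I/(c_I\tilde c_I)\bigr)_I$; so if $g_ie^i\otimes e^{\sigma_i}$ and $g_i'e^i\otimes e^{\sigma_i}$ both satisfy the conditions for the same $X$, then $h:=(g_i/g_i')\in H$ and is $\sigma$-invariant. Splitting $h=\operatorname{sign}(h)\,|h|$ gives $\operatorname{sign}(h)\in G_\sigma$ and $|h|\in H\cap(\R_{>0})^n=\exp(\ker M_\Delta)$, so $\sqrt{|h|}\in H$ and is $\sigma$-invariant; then $\bigl(\operatorname{sign}(h),1/\sqrt{|h|}\bigr)\in G_\sigma\times H$ carries the second metric to the first, so the orbit depends only on $X$.

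The step I expect to be the main obstacle is the recombination in the second paragraph: pinning down the factor $\tfrac12$ and, more delicately, checking that the two summands of the quoted component formula for $\ric^k_h$ assemble into $\tfrac12\tran M_\Delta Y$ with the correct per-node signs. This requires care with the antisymmetry of the structure constants — the rule $c_{ijk}=0$ for $i>j$ is a shorthand, and one must restore the contributions of both orderings of each bracket to see that $E_I$ feeds $c_I\tilde c_I\bigl(e^{M_\Delta}\diag g\bigr)_I$ into $\ric^i_i,\ric^j_j,\ric^k_k$ with coefficients $-\tfrac12,-\tfrac12,+\tfrac12$ — and with how $\sigma$ interacts with the total order on $\mathcal I_\Delta$ and with the signs hidden in the definition of $\tilde\sigma$.
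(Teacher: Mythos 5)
Your proposal is correct and follows essentially the same route as the paper: it establishes diagonality of the Ricci operator from the nice condition together with $\sigma\in\Aut(\Delta)$, identifies $2\ric^h_h$ with $(\tran M_\Delta X)_h$ for $x_I=c_I\tilde c_I\,g_k/(g_ig_j)$, and proves uniqueness of the $G_\sigma\times H$-orbit by splitting the ratio of two compatible metrics into its sign part (landing in $G_\sigma$) and its positive part $\exp(t)$, $t\in\ker M_\Delta$, whose square root is a diagonal automorphism. The recombination step you flag as the main obstacle is exactly the computation the paper carries out via the contraction formula \eqref{eqn:jensen_ricci}, and it works out with the coefficients you state.
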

\begin{proof}
Write the general $\sigma$-diagonal metric as $g_i e^i\otimes e^{\sigma_i}$, where $g=\diag(g_1,\allowbreak \dots,g_n)$ is $\sigma$-invariant.

If $I=\{\{i,j\},k\}$, write
\[E^\sigma_I=e_{\sigma_i,\sigma_j}\otimes e^{\sigma_k}, \quad g_I=\frac{g_k}{g_ig_j},\]
so that $q(E_I)=g_IE^\sigma_I$.
By equation \eqref{eqn:jensen_ricci}, for any $A\in \gl(n,\R)$
\[\langle \ric, A\rangle =\frac12\sum_{I,J\in\mathcal{I}_\Delta}\langle c_I (A E_I),c_Jg_J E^\sigma_J\rangle.\]
Since $\sigma$ is an automorphism, and due to the properties of a nice diagram, two weight vectors $e^{ij}\otimes e_k$ and $e^{\sigma_i,\sigma_j}\otimes e_h$ can only appear in $V_\Delta$ if $k=\sigma_h$; similarly, if both $e^{ij}\otimes e_k$ and $e^{\sigma_i,h}\otimes e_{\sigma_k}$ appear then  $h=\sigma_j$. This implies that $\langle AE_I,E^\sigma_J\rangle$ is zero when the diagonal part of $A$ is zero, or when  $I$ and $J$ are not related by $\sigma$.

In other words, the Ricci operator is diagonal, and we have
\begin{multline*}
2\ric^h_{h} =2\langle \ric, e^h\otimes e_h\rangle = \langle e^h\otimes e_h \cdot c_{ijk}e^i\wedge e^j\otimes e_k, \sigma(c)_{i,j,k}  \frac {g_k}{g_ig_j} e_{i}\wedge e_{j}\otimes e^{k}\rangle \\
= c_{ijk}\tilde\sigma(c)_{ijk} \frac {g_k}{g_ig_j} (\delta_{hk}-\delta_{hi}-\delta_{hj})
=x_{ijk}(\delta_{hk}-\delta_{hi}-\delta_{hj}),
 \end{multline*}
where $X=(x_I)$ is defined as in condition~\ref{thm:sigmacompatible:cond:dueimmagine} (see also \cite[Theorem 8]{Payne:ExistenceOfSolitonMetrics} in the Riemannian case). It follows that $\tran{X}M_\Delta =
2(\ric^1_{1},\dots,\ric^n_{n})$, and therefore \eqref{eqn:normalized_einstein} is equivalent to conditions~\ref{thm:sigmacompatible:cond:unoeqlineare}--\ref{thm:sigmacompatible:cond:tresigma}.

Moreover, it is clear that condition~\ref{thm:sigmacompatible:cond:tresigma} is invariant under $G_\sigma\times H$. Now let $h$ be another $\sigma$-diagonal metric compatible with $X$; then, $e^{M_\Delta}(g_1,\dotsc, g_n)=e^{M_\Delta}(h_1,\dotsc, h_n)$, i.e.
\[
 f=\begin{pmatrix}g_1/h_1 \\ \vdots \\ g_n/h_n\end{pmatrix} \in \ker e^{M_\Delta}.
\]
In particular, if we define $M_{\Delta,2}$ as the matrix with entries in $\Z_2$ obtained by projecting the integer matrix $M_\Delta$, we have
\[M_{\Delta,2}(\logsign f)=\logsign e^{M_\Delta}(f)=0;\]
in other words, $f\in G_\sigma$.  Therefore, we can assume up to the action of $G_\sigma$ that each $g_i$ has the same sign as $h_i$, so that $f=\exp(t)$, $t\in \ker M_\Delta$. Then $\exp(t/2)$ defines a diagonal automorphism that maps $g$ into $h$.
\end{proof}
\begin{remark}
Theorem~\ref{thm:sigmacompatible} illustrates a phenomenon which is specific to metrics of indefinite signature: indeed,  Einstein Riemannian metrics on a nilpotent Lie algebras only exist if the Lie algebra is abelian (see \cite{Milnor:curvatures}), and the theorem becomes a trivial statement for positive-definite signature. Nevertheless, it might be instructive to specialize the statement to the positive-definite case for comparison with the literature.

To this end, we assume that $g$ has positive entries and $\sigma=\id$, so that the metric is diagonal and Riemannian, and condition \ref{thm:sigmacompatible:cond:tresigma} is trivially satisfied. In this case,
Theorem~\ref{thm:sigmacompatible} states that an Einstein, diagonal Riemannian metric exists if and only if  $\tran M_\Delta X=[1]$ for some $X=(x_{ijk})$ satisfying the equations
\[\frac{x_{ijk}}{c_{ijk}^2}=\frac{g_k}{g_ig_j}.\]
In particular, all entries in $X$ should be positive. Unsurprisingly, this cannot occur, as  $\tran M_\Delta X=[1]$ implies that
\[ -\sum x_{ijk}=\tran [-1]X=(\tran[1]\tran M_\Delta) X = \tran[1][1]>0.\]

The presence of condition~\ref{thm:sigmacompatible:cond:unoeqlineare} implies that we are considering nilpotent Lie algebras with Nikolayevsky derivation equal to zero; in the  Riemannian setting, the Nikolayevsky derivation measures the difference between the Ricci tensor of a nilsoliton metric and the Ricci tensor of an Einstein metric (see \cite{Nikolayevsky}), so in the case at hand the two conditions become formally equivalent.

By a theorem of Nikolayevsky (\cite[Theorem 3]{Nikolayevsky}), a nice nilpotent Lie algebra admits a Riemannian nilsoliton metric if and only if  the linear system $M_\Delta \tran M_\Delta X=[-1]$ has a solution with positive entries. By~\cite[Theorem~3.1]{Payne:Applications}, the Nikolayevsky derivation corresponds to the vector $\tran M_\Delta b+[1]$ where $b$ is any solution of $M_\Delta \tran M_\Delta b=[1]$; therefore, under the assumption that the Nikolayevsky derivation is zero,  $\tran M_\Delta X=[1]$ becomes equivalent to  $M_\Delta \tran M_\Delta X=[-1]$. Thus, Theorem~\ref{thm:sigmacompatible} is formally analogous to Nikolayevsky's theorem, with the linear inequalities $x_{ijk}>0$ replaced by the more complicated condition \ref{thm:sigmacompatible:cond:dueimmagine}.

In the pseudoriemannian case, condition \ref{thm:sigmacompatible:cond:dueimmagine}  still implies certain linear inequalities in the $x_{ijk}$ depending on the signature (see next Corollary~\ref{cor:necessary_condition}), but these linear inequalities do not imply \ref{thm:sigmacompatible:cond:dueimmagine} (see Example~\ref{example:nonsurjective} below). In fact, Nikolayevsky's proof uses a common feature of nilsolitons and Einstein metrics, namely their characterization as critical points of the scalar curvature functional on appropriately defined spaces of metrics (see \cite{Jensen:scalar,Nikolayevsky}). In the positive definite case this functional has the form $\sum_I b_I e^{\langle \cdot, w_I\rangle}$, where $w_I$ is the $I$-th row of $M_\Delta$ and $b_I=\frac12c_I^2$. In the pseudoriemannian case a similar formula holds, but the $b_I$ have a sign that depends on the signature; convexity, which is the key property in the proof of  \cite[Theorem 3]{Nikolayevsky}, is therefore lost.
\end{remark}

\begin{remark}
\label{rem:fintaFamigliaParam}
In analogy with the simple case, where the canonical Einstein metric is typically an isolated point (see \cite[Theorem 12.3]{DerdzinskiGal}), it is natural to ask whether $\sigma$-diagonal solutions of \eqref{eqn:normalized_einstein} can appear in continuous families on a fixed Lie algebra. Since the action of $H=\ker e^{M_\Delta}$ on the space of metrics amounts to a nice Lie algebra equivalence, two metrics $g,g'$ should be regarded as genuinely different if $e^{M_\Delta}(g)$ differs from $e^{M_\Delta}(g')$. In particular, one can ask if the map $e^{M_\Delta}$ is transverse to the affine subspace in $\R^{\mathcal{I}_\Delta}$ defined by the equations
\[\tran M_\Delta {(c_I\tilde c_I y_I)}=[1].\]
This would imply the existence of a solution  $X$ to $\tran M_\Delta {X}=[1]$ and some $v\in d_n$ with $M_\Delta^D(v)X$ a nonzero element of $\ker\tran{M_\Delta}$.

We have not found this condition to be verified for any of the Einstein metrics appearing in this paper.
\end{remark}
\begin{remark}
The statement of Theorem~\ref{thm:sigmacompatible} can be modified to characterize Ricci-flat $\sigma$-diagonal  metrics by replacing $[1]$ with the zero vector (see~\cite{ContiRossi:RicciFlat}).
\end{remark}

If we consider diagonal metrics, corresponding to the case $\sigma=\id$, we obtain necessary conditions that only depend on the diagram:
\begin{corollary}
\label{cor:necessary_condition}
Let $\g$ be a nice Lie algebra with diagram $\Delta$.  If $\g$ admits a diagonal metric satisfying \eqref{eqn:normalized_einstein}, then there exists a vector $X=(x_I)\in \R^{\mathcal{I}_\Delta}$ such that
\begin{description}
\item[($\mathbf{K}$)\namedlabel{eqn:linear_system}{($\mathbf{K}$)}]
$\tran M_\Delta X=[1]$;
\item[($\mathbf{H}$)\namedlabel{necessarycondition2}{($\mathbf{H}$)}] each component $x_I$ is not zero;
\item[($\mathbf{L}$)\namedlabel{necessarycondition3}{($\mathbf{L}$)}] the vector $(\logsign x_I)\in  (\Z_2)^{\mathcal{I}_\Delta}$ is in the image of $M_{\Delta,2}$.
\end{description}
\end{corollary}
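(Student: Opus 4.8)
The plan is to obtain the corollary as the special case $\sigma=\id$ of Theorem~\ref{thm:sigmacompatible}, reading off the three conditions as elementary sign bookkeeping. First I would observe that when $\sigma$ is the identity a $\sigma$-diagonal metric is precisely a diagonal metric, the induced isomorphism $\tilde\sigma$ of $V_\Delta$ is the identity so that $\tilde c_I=c_I$, and condition~\ref{thm:sigmacompatible:cond:tresigma} is automatic since every $g\in D_n$ is fixed by $\id$. Hence, if $\g$ admits a diagonal metric satisfying \eqref{eqn:normalized_einstein}, Theorem~\ref{thm:sigmacompatible} provides $X=(x_I)\in\R^{\mathcal{I}_\Delta}$ with $\tran M_\Delta X=[1]$ and $\bigl(x_I/c_I^2\bigr)=e^{M_\Delta}(g)$ for some $g=\diag(g_1,\dots,g_n)\in D_n$. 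The first equation is literally \ref{eqn:linear_system}.

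For \ref{necessarycondition2}, I would use that the diagram of $\g$ is $\Delta$: by the definition of the nice diagram, each element of $\mathcal{I}_\Delta$ records a nonzero Lie bracket, so $c_I\neq0$ for all $I$; on the other hand every entry of $e^{M_\Delta}(g)\in D_m$ is a product of nonzero reals $g_i^{-1}g_j^{-1}g_k$ and hence nonzero. Therefore $x_I=c_I^2\,\bigl(e^{M_\Delta}(g)\bigr)_I\neq0$ for each $I\in\mathcal{I}_\Delta$, which is \ref{necessarycondition2}.

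For \ref{necessarycondition3}, I would apply $\logsign$ componentwise to $\bigl(x_I/c_I^2\bigr)=e^{M_\Delta}(g)$. Since $c_I^2>0$ this gives $\logsign(x_I)=\logsign\bigl((e^{M_\Delta}(g))_I\bigr)$, and by the identity $\logsign\circ e^{M_\Delta}=M_{\Delta,2}\circ\logsign$ — the same relation used in the proof of Theorem~\ref{thm:sigmacompatible} — the right-hand side is $\bigl(M_{\Delta,2}(\logsign g)\bigr)_I$. Thus $(\logsign x_I)=M_{\Delta,2}(\logsign g)$ lies in the image of $M_{\Delta,2}$, giving \ref{necessarycondition3}.

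Since the statement is a direct specialization of Theorem~\ref{thm:sigmacompatible}, there is no substantial obstacle; the only points that require care are the sign bookkeeping — in particular, justifying $c_I\neq0$ (which is exactly the assumption that $\g$ has diagram $\Delta$, rather than a proper sub-diagram) and that $\logsign$ intertwines the multiplicative homomorphism $e^{M_\Delta}$ with the $\Z_2$-linear map $M_{\Delta,2}$.
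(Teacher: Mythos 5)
Your proposal is correct and follows exactly the paper's route: the paper also obtains the corollary by specializing Theorem~\ref{thm:sigmacompatible} to $\sigma=\id$, reading \ref{eqn:linear_system} and \ref{necessarycondition2} directly from conditions~\ref{thm:sigmacompatible:cond:unoeqlineare} and \ref{thm:sigmacompatible:cond:dueimmagine}, and deriving \ref{necessarycondition3} by applying $\logsign$ to $\bigl(x_I/c_I^2\bigr)=e^{M_\Delta}(g)$ via $\logsign e^{M_\Delta}(g)=M_{\Delta,2}\logsign g$. Your write-up merely makes explicit the sign bookkeeping (in particular $c_I\neq0$) that the paper leaves implicit.
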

\begin{proof}
\ref{eqn:linear_system} and~\ref{necessarycondition2} follow immediately from Theorem~\ref{thm:sigmacompatible}. Setting $\sigma=\id$ in Theorem~\ref{thm:sigmacompatible}, condition~\ref{thm:sigmacompatible:cond:tresigma} implies
\begin{equation}
\label{eqn:polynomial_eqns}
\biggl(\frac{x_I}{c^2_I}\biggr)_{I\in\mathcal{I}_\Delta}=e^{M_\Delta}(g),
\end{equation}
and in particular
\[\logsign X= \logsign e^{M_\Delta}(g) = M_{\Delta,2}\logsign g.\qedhere\]
\end{proof}
\begin{remark}
It is possible to generalize the construction to arbitrary Lie algebras with a fixed basis, defining labeled diagrams and weight vectors $c_IE_I\in V_\Delta$ in a similar way as in Section~\ref{sec:diagrams}. Whilst~\ref{eqn:linear_system},~\ref{necessarycondition2} and~\ref{necessarycondition3} remain necessary in this case, the conditions of Theorem~\ref{thm:sigmacompatible}  are not sufficient  because the Ricci operator is not necessarily diagonal.
\end{remark}

\begin{remark}
In the more general situation of a diagram involution $\sigma$ and a $\sigma$-diagonal metric, we have by construction
\begin{equation}
 \label{eqn:epsilonsigma}
\sigma M_\Delta\sigma = M_\Delta,
\end{equation}
where $\sigma$ also denotes the permutation matrix corresponding to the action of $\sigma\in\Aut(\Delta)$ on $V_\Delta$. Thus, if~\ref{eqn:linear_system} has a solution, it also admits a $\tau$-invariant solution. Constructing a $\sigma$-diagonal metric that satisfies \eqref{eqn:normalized_einstein} is then a matter of solving
\begin{equation*}
\biggl(\frac{x_I}{c_I\tilde{c}_I}\biggr)_{I\in\mathcal{I}_\Delta}=e^{M_\Delta}(g)
\end{equation*}
with $g$ $\sigma$-invariant. Conditions~\ref{eqn:linear_system} and~\ref{necessarycondition2} also apply to this case; however the analogue of~\ref{necessarycondition3} turns out to depend on the structure constants. We refer to \cite{ContiRossi:RicciFlat} for more details.
\end{remark}

On the constructive side, we can split the problem in two: first, one solves the linear system~\ref{eqn:linear_system}, verifying whether there exists a solution satisfying conditions~\ref{necessarycondition2} and~\ref{necessarycondition3}. Then, one has to solve the polynomial equations \eqref{eqn:polynomial_eqns}. If one is only interested in proving the existence of an Einstein metric, one can in some cases use the following result to skip the second step:
\begin{theorem}\label{thm:Sufficient_Condition}
Let $\g$ be a nice nilpotent Lie algebra with diagram $\Delta$; if $M_{\Delta,2}$ is surjective and $X$ is a vector satisfying conditions \ref{eqn:linear_system} and \ref{necessarycondition2} in Corollary~\ref{cor:necessary_condition}, then there exists a diagonal metric satisfying \eqref{eqn:normalized_einstein}.
\end{theorem}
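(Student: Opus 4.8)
The plan is to deduce the statement directly from Theorem~\ref{thm:sigmacompatible} applied with $\sigma=\id$. For $\sigma=\id$ condition~\ref{thm:sigmacompatible:cond:tresigma} is vacuous, $\tilde c_I=c_I$, and condition~\ref{thm:sigmacompatible:cond:unoeqlineare} is precisely~\ref{eqn:linear_system}, which holds by hypothesis; so the whole proof reduces to verifying condition~\ref{thm:sigmacompatible:cond:dueimmagine}, i.e.\ that $(x_I/c_I^2)_{I\in\mathcal{I}_\Delta}$ lies in the image of $e^{M_\Delta}\colon D_n\to D_m$. This vector has all nonzero components, since $x_I\neq0$ by~\ref{necessarycondition2} and $c_I\neq0$ for every $I\in\mathcal{I}_\Delta$, each such $I$ corresponding to an arrow of $\Delta$ and hence to a nonzero bracket of $\g$. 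Thus it suffices to prove that, when $M_{\Delta,2}$ is surjective, $e^{M_\Delta}$ is surjective onto $D_m$.

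To this end I would unwind the definition of $e^{M_\Delta}$ via diagram~\eqref{eqn:diag_exp_M_delta}. Writing a diagonal matrix in $D_n$ as $g=\diag(\epsilon_1h_1,\dots,\epsilon_nh_n)$ with $h_j>0$ and $\epsilon_j=\pm1$, and using that the entries of $M_\Delta$ are integers, the $I$-th entry of $e^{M_\Delta}(g)$ splits as the product of the positive number $\prod_jh_j^{(M_\Delta)_{Ij}}=e^{(M_\Delta\log h)_I}$ and the sign $\prod_j\epsilon_j^{(M_\Delta)_{Ij}}=(-1)^{(M_{\Delta,2}\logsign\epsilon)_I}$; equivalently, $\log\abs{e^{M_\Delta}(g)}=M_\Delta\log h$ and $\logsign e^{M_\Delta}(g)=M_{\Delta,2}\logsign\epsilon$ (this is consistent with the description of $H=\ker e^{M_\Delta}$ recalled in Section~\ref{sec:diagrams}). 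Consequently a vector $v\in D_m$ lies in the image of $e^{M_\Delta}$ if and only if $\log\abs{v}$ lies in the image of $M_\Delta\colon\R^n\to\R^m$ and $\logsign v$ lies in the image of $M_{\Delta,2}$. Since $\log\abs v$ and $\logsign v$ are otherwise unconstrained, $e^{M_\Delta}$ is surjective exactly when $M_\Delta$ is surjective over $\R$ and $M_{\Delta,2}$ is surjective over $\Z_2$; note that the latter already makes condition~\ref{necessarycondition3} of Corollary~\ref{cor:necessary_condition} automatic, which is why it is absent from the hypotheses.

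The remaining point --- elementary, but the only one that needs an argument --- is that surjectivity of $M_{\Delta,2}$ over $\Z_2$ forces surjectivity of $M_\Delta$ over $\R$. Indeed, an $m\times n$ integer matrix whose reduction mod $2$ has full row rank $m$ must have full row rank $m$ over $\Q$: a nontrivial $\Q$-linear relation among its rows, cleared of denominators and divided by the gcd of the resulting integer coefficients, would reduce mod $2$ to a nontrivial relation among the rows of $M_{\Delta,2}$, contradicting its full rank. Hence $M_\Delta$ is onto $\R^m$, so $e^{M_\Delta}$ is onto $D_m$, condition~\ref{thm:sigmacompatible:cond:dueimmagine} holds for the given $X$, and Theorem~\ref{thm:sigmacompatible} yields the desired diagonal metric satisfying~\eqref{eqn:normalized_einstein}. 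I do not anticipate any real obstacle: the correspondence between $D_n$, $d_n$ and $M_\Delta$ is already established in Section~\ref{sec:diagrams}, and the proof is just the specialization $\sigma=\id$ of Theorem~\ref{thm:sigmacompatible} combined with the mod-$2$ versus real surjectivity comparison.
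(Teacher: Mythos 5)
Your proposal is correct and follows essentially the same route as the paper: reduce to condition~\ref{thm:sigmacompatible:cond:dueimmagine} of Theorem~\ref{thm:sigmacompatible} with $\sigma=\id$, split $e^{M_\Delta}$ into its sign part (governed by $M_{\Delta,2}$) and magnitude part (governed by $M_\Delta$ over $\R$), and deduce real surjectivity of $M_\Delta$ from the mod-$2$ row independence by clearing denominators and dividing by the gcd. Your write-up merely makes explicit the steps the paper states tersely, including the observation that~\ref{necessarycondition3} becomes automatic.
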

\begin{proof}
Consider the diagram given in \eqref{eqn:diag_exp_M_delta}. Since $M_{\Delta,2}$ is surjective, $e^{M_\Delta}(D_n)$ intersects each connected component of $D_m$, as can be seen by considering the image of elements of the form $\diag(\pm1,\dots, \pm1)$.

In addition, the rows of $M_\Delta$ are linearly independent over $\Z_2$, hence over $\Q$; thus, $M_\Delta$ is also surjective over $\R$ and $e^{M_\Delta}$ is an epimorphism. It follows that  \eqref{eqn:polynomial_eqns} has a solution.
\end{proof}
Motivated by this result, we will say that $\g$ is of \emph{surjective type} when  $M_{\Delta,2}$ is surjective. This condition has remarkable consequences in the classification of nice Lie algebras: if $\g$ is a nice nilpotent Lie algebra of surjective type, any nice Lie algebra with the same diagram is equivalent to $\g$ (see \cite{ContiRossi:Construction}).

\begin{example}
\label{example:nonsurjective} 
The obvious fact that surjective matrices cannot have more rows than columns limits the scope of Theorem~\ref{thm:Sufficient_Condition} to nice Lie algebras where the  number of nonzero brackets does not exceed the dimension. For an example where the number of brackets is less than the dimension, consider
\[\texttt{952:355} = (0,0,0,0,e^{12},e^{34},e^{13}+e^{24},e^{15}+e^{23},e^{14}+e^{36}),\]
where the string \texttt{952} corresponds the lower central series dimensions and the full label \texttt{952:355} refers to the classification of \cite{ContiRossi:Construction}.

The corresponding $8\times 9$ matrix  is
\begin{equation}
\label{eqn:nonsurjective}
M_\Delta = \left(
\begin{array}{ccccccccc}
 -1 & -1 & 0 & 0 & 1 & 0 & 0 & 0 & 0 \\
 0 & 0 & -1 & -1 & 0 & 1 & 0 & 0 & 0 \\
 -1 & 0 & -1 & 0 & 0 & 0 & 1 & 0 & 0 \\
 0 & -1 & 0 & -1 & 0 & 0 & 1 & 0 & 0 \\
 -1 & 0 & 0 & 0 & -1 & 0 & 0 & 1 & 0 \\
 0 & -1 & -1 & 0 & 0 & 0 & 0 & 1 & 0 \\
 -1 & 0 & 0 & -1 & 0 & 0 & 0 & 0 & 1 \\
 0 & 0 & -1 & 0 & 0 & -1 & 0 & 0 & 1 \\
\end{array}
\right);
\end{equation}
it has maximal rank over $\Z_2$ and $\tran{X}=(-6,-6,4,-3,-7,8,8,-7)$ satisfies the hypotheses of Theorem~\ref{thm:Sufficient_Condition}. Thus, this Lie algebra carries a solution of~\eqref{eqn:normalized_einstein}; the metric is computed in Example~\ref{ex:952:355DiagEinstein}.
\end{example}

When the number of brackets exceeds the dimension, the root matrix is not surjective and system \eqref{eqn:polynomial_eqns} contains more equations than unknowns, but may still admit a solution. Examples of diagonal Einstein metrics with nonsurjective $M_\Delta$ appear in Table~\ref{table:8DimNiceEinsteinDiagonal} (\texttt{86532:6}, \texttt{842:121a} and \texttt{842:121b}).

\subsection{Lie algebras without Einstein metrics}

\begin{table}[thp]
{\setlength{\tabcolsep}{2pt}
\centering
\caption{9-dimensional nice nilpotent Lie algebras with surjective $M_\Delta$  not satisfying \ref{necessarycondition2} or \ref{necessarycondition3}\label{9dimObstructedSurjective} and their upper central series dimensions}
\begin{footnotesize}
\begin{tabular}{>{\ttfamily}c C C c}
\toprule
\textnormal{Name} & \g & \text{UCS Dim.} & \text{Obstruction}\\
\midrule
\multicolumn{4}{c}{Type: $32211$}\\
96421:82a&0,0,0,e^{12},e^{13},e^{14},e^{15}+e^{23},e^{16},e^{18}+e^{24}+e^{35}&24679 &\ref{necessarycondition3}\\
96421:82b&0,0,0,- e^{12},e^{13},e^{14},e^{15}+e^{23},e^{16},e^{18}+e^{24}+e^{35}&24679 &\ref{necessarycondition3}\\
96421:84a&0,0,0,e^{12},e^{13},e^{24},e^{15}+e^{23},e^{26},e^{14}+e^{28}+e^{35}&24679 &\ref{necessarycondition3}\\
96421:84b&0,0,0,- e^{12},e^{13},e^{24},e^{15}+e^{23},e^{26},e^{14}+e^{28}+e^{35}&24679 &\ref{necessarycondition3}\\
\multicolumn{4}{c}{Type: $3222$}\\
9642:113a&0,0,0,e^{12},e^{13},e^{14},e^{15},e^{16}+e^{23},e^{17}+e^{24}+e^{35}&2469 &\ref{necessarycondition3}\\
9642:113b&0,0,0,- e^{12},e^{13},e^{14},e^{15},e^{16}+e^{23},e^{17}+e^{24}+e^{35}&2469 &\ref{necessarycondition3}\\
9642:115a&0,0,0,e^{12},e^{13},e^{14},e^{35},e^{16}+e^{23},e^{15}+e^{24}+e^{37}&2469 &\ref{necessarycondition3}\\
9642:115b&0,0,0,e^{12},- e^{13},e^{14},e^{35},e^{16}+e^{23},e^{15}+e^{24}+e^{37}&2469 &\ref{necessarycondition3}\\
\multicolumn{4}{c}{Type: $3321$}\\
9631:87a&0,0,0,e^{12},e^{13},e^{23},e^{14},e^{15}+e^{36},e^{17}+e^{24}+e^{35}&2579 &\ref{necessarycondition3}\\
9631:87b&0,0,0,- e^{12},e^{13},e^{23},e^{14},e^{15}+e^{36},e^{17}+e^{24}+e^{35}&2579 &\ref{necessarycondition3}\\
9631:93a&0,0,0,e^{12},e^{13},e^{23},e^{14},e^{15}+e^{26},e^{17}+e^{24}+e^{35}&2579 &\ref{necessarycondition3}\\
9631:93b&0,0,0,e^{12},- e^{13},e^{23},e^{14},e^{15}+e^{26},e^{17}+e^{24}+e^{35}&2579 &\ref{necessarycondition3}\\
9631:93c&0,0,0,- e^{12},e^{13},e^{23},e^{14},e^{15}+e^{26},e^{17}+e^{24}+e^{35}&2579 &\ref{necessarycondition3}\\
9631:93d&0,0,0,- e^{12},- e^{13},e^{23},e^{14},e^{15}+e^{26},e^{17}+e^{24}+e^{35}&2579 &\ref{necessarycondition3}\\
\multicolumn{4}{c}{Type: $423$}\\
953:166a&0,0,0,0,e^{12},e^{13},e^{15}+e^{23},e^{16}+e^{24},e^{14}+e^{25}+e^{36}&369 &\ref{necessarycondition3}\\
953:166b&0,0,0,0,- e^{12},e^{13},e^{15}+e^{23},e^{16}+e^{24},e^{14}+e^{25}+e^{36}&369 &\ref{necessarycondition3}\\
953:169a&0,0,0,0,e^{12},e^{13},e^{15}+e^{23},e^{16}+e^{34},e^{14}+e^{25}+e^{36}&369 &\ref{necessarycondition3}\\
953:169b&0,0,0,0,e^{12},e^{13},e^{15}+e^{23},-e^{16}+e^{34},e^{14}+e^{25}+e^{36}&369 &\ref{necessarycondition3}\\
953:169c&0,0,0,0,- e^{12},e^{13},e^{15}+e^{23},e^{16}+e^{34},e^{14}+e^{25}+e^{36}&369 &\ref{necessarycondition3}\\
953:169d&0,0,0,0,- e^{12},e^{13},e^{15}+e^{23},-e^{16}+e^{34},e^{14}+e^{25}+e^{36}&369 &\ref{necessarycondition3}\\
\multicolumn{4}{c}{Type: $432$}\\
952:384&0,0,0,0,e^{12},e^{34},e^{14}+e^{23},e^{13}+e^{25},e^{15}+e^{24}+e^{36}&359 &\ref{necessarycondition2}\\
952:732a&0,0,0,0,e^{12},e^{13},e^{34},e^{15}+e^{24}+e^{36},e^{14}+e^{25}+e^{37}&259 &\ref{necessarycondition3}\\
952:732b&0,0,0,0,e^{12},e^{13},e^{34},-e^{15}+e^{24}+e^{36},e^{14}+e^{25}+e^{37}&259 &\ref{necessarycondition3}\\
952:733a&0,0,0,0,e^{12},e^{13},e^{34},e^{14}+e^{25}+e^{36},e^{15}+e^{24}+e^{37}&259 &\ref{necessarycondition3}\\
952:733b&0,0,0,0,e^{12},e^{13},e^{34},e^{36}+e^{25}-e^{14},e^{15}+e^{24}+e^{37}&259 &\ref{necessarycondition3}\\
952:733c&0,0,0,0,- e^{12},e^{13},e^{34},e^{14}+e^{25}+e^{36},e^{15}+e^{24}+e^{37}&259 &\ref{necessarycondition3}\\
952:733d&0,0,0,0,- e^{12},e^{13},e^{34},-e^{14}+e^{25}+e^{36},e^{15}+e^{24}+e^{37}&259 &\ref{necessarycondition3}\\
\end{tabular}
\end{footnotesize}
}
\end{table}

In this section we collect some examples of nice Lie algebras that do not admit diagonal Einstein metrics. These examples show that the necessary conditions~\ref{eqn:linear_system},~\ref{necessarycondition2} and~\ref{necessarycondition3} are independent, in the sense that \ref{eqn:linear_system} does not imply \ref{necessarycondition2} and \ref{necessarycondition2} does not imply \ref{necessarycondition3}, and not sufficient for the existence of an Einstein metric. In addition, we give an example of a nice diagram with a nontrivial automorphism $\sigma$ such that conditions~\ref{thm:sigmacompatible:cond:unoeqlineare} and \ref{thm:sigmacompatible:cond:dueimmagine} of Theorem~\ref{thm:sigmacompatible} can be satisfied but not condition~\ref{thm:sigmacompatible:cond:tresigma}.

\begin{remark}
We have already pointed out that there exist nice nilpotent Lie algebras that do not satisfy condition~\ref{eqn:linear_system}: indeed, it is not satisfied by any nice nilpotent Lie algebras of dimension $\leq 7$ (see~\cite{ContiRossi:Construction}; in fact, by Lemma~\ref{lemma:TracelessDerivations} condition~\ref{eqn:linear_system} is the same obstruction considered in \cite{ContiRossi:EinsteinNilpotent}).
\end{remark}

\begin{example}[Hyperplane obstruction \ref{necessarycondition2}] 
Take the Lie algebra \texttt{952:384}:
\[(0,0,0,0,e^{12},e^{34},e^{14}+e^{23},e^{13}+e^{25},e^{15}+e^{24}+e^{36}).\]
It is easy to see that the associated matrix $M_{\Delta}$
has maximal rank over $\Z_2$. It turns out that 
\[(-6, -6, -3, 4, 8, -7, 0, 8, -7) M_{\Delta}=(1,\dotsc, 1),\] 
but this solution lies in the coordinate hyperplane $x_{159}=0$. Thus, \ref{necessarycondition2} is not satisfied; in this case, any diagonal solution of \eqref{eqn:normalized_einstein} would have to satisfy $g_9/(g_1 g_5) = 0$, which is impossible. Note that this is the only Lie algebra in dimension $9$ with $M_{\Delta}$ surjective that is obstructed by condition \ref{necessarycondition2}.

The same obstruction can also occur for nonsurjective $M_{\Delta}$. For example, take the Lie algebra \texttt{9531:495a}: 
\[(0,0,0,0,- e^{12},-e^{13}+e^{24},e^{15}+e^{34},e^{14}+e^{26}+e^{35},e^{17}+e^{23}+e^{46}).\]
The general solution of $\tran XM_{\Delta}=(1,\dots, 1)$ is
\begin{multline*}
\tran X=(x_{125}, x_{136}, -3, 0, -8 - x_{125} - x_{136}, 8,
\\ -6 - x_{125}, -1 +x_{125}, -9 - x_{125} - x_{136}, 8, 2 + x_{125} + x_{136}),
\end{multline*}
which is contained in the coordinate hyperplane $x_{157}=0$.
\end{example}

\begin{example}[Sign obstruction~\ref{necessarycondition3}]
Take the Lie algebra \texttt{9631:93a}:
\[(0,0,0,e^{12},e^{13},e^{23},e^{14},e^{15}+e^{26},e^{17}+e^{24}+e^{35}).\]

It is easy to see that the associated matrix $M_\Delta$
has maximal rank over $\R$ but not over $\Z_2$. It turns out that the unique solution of  $\tran XM_{\Delta}=(1,\dots, 1)$ is 
\[\tran X=(5, 14, -13, -17, 15, -14, -18, 21, -2).\]
However, $\logsign X$ is not in the image of $M_{\Delta,2}$; in other words, the equations
\[\frac{g_4}{g_1 g_2}=5,\ \frac{g_5}{g_1 g_3}=14,\ \frac{g_9}{g_2g_4}=21,\ \frac{g_9}{g_3g_5}=-2\]
do not admit a solution.

For an example where  $M_{\Delta}$ is not surjective, consider the Lie algebra \texttt{9521:217a}:
\[(0,0,0,0,e^{12},e^{14}+e^{23},e^{13}+e^{24},e^{15},e^{18}+e^{25}+e^{37}+e^{46});\]
the matrix $M_\Delta$ has maximal rank over $\R$ but not over $\Z_2$. In this case the general solution is $\tran{X}=(5, 25/2, x_{236}, 25/2,-25 - x_{236},  -15, -16,19, -27/2 - x_{236}, 23/2 + x_{236})$; no element in this affine space satisfies
~\ref{necessarycondition3}, reflecting the fact that the equations
\begin{gather*}
\frac{g_{5}}{g_{1} g_{2}}=5,\ \frac{2 g_{7}}{g_{1} g_{3}}=25,\ \frac{g_6}{g_3g_2}+\frac{g_7}{g_4g_2}=-25,\ \frac{g_{9}}{g_{2} g_{5}}=19,\ \frac{2 g_{6}}{g_{2} g_{3}}+\frac{2 g_{9}}{g_{3} g_{7}}=-27\\
\frac{2 g_{6}}{g_{1} g_{4}}=25,\ \frac{g_{8}}{g_{1} g_{5}}=-15,\ \frac{g_{9}}{g_{1} g_{8}}=-16,\ \frac{g_{9}}{g_{2} g_{5}}=19,\ \frac{2 g_{6}}{g_{2} g_{3}}+23=\frac{2 g_{9}}{g_{4} g_{6}}
\end{gather*}
do not admit a solution (solving the first five equations in $g_1, g_5, g_6, g_7, g_9$ and substituting in the last one leads to $\frac{38 g_2^2+5 g_3^2}{76 g_2^2+135 g_3^2}=0$).

The same argument applies to all nice Lie algebras with the same diagrams, since the obstructions only depend on the diagram. For other examples, see Table~\ref{9dimObstructedSurjective}.
\end{example}

\begin{example}[Nonsurjective $M_{\Delta}$ satisfying~\ref{eqn:linear_system},~\ref{necessarycondition2} and~\ref{necessarycondition3} but with no solution of~\eqref{eqn:polynomial_eqns}]
Take the Lie algebra \texttt{96421:147a}:
\[
(0,0,0,e^{12},e^{13},e^{24},e^{15}+e^{23},e^{14}+e^{26},e^{16}+e^{28}+e^{35}).
\]
It is easy to see that $M_\Delta$ (and consequently $M_{\Delta,2}$) is not surjective, but there exists a vector $X$ that satisfies ~\ref{eqn:linear_system},~\ref{necessarycondition2} and~\ref{necessarycondition3}.
However, the associated system~\eqref{eqn:polynomial_eqns} is the following:
\begin{gather*}
\frac{2 g_4}{g_1 g_2}=11,\ \frac{2 g_5}{g_1 g_3}+21=0,\ \frac{g_7}{g_1 g_5}=-10,\\
\frac{g_7}{g_2 g_3}=11,\ \frac{2 g_8}{g_2 g_6}+21=0,\ \frac{2 g_9}{g_3 g_5}+3=0,\ \frac{g_1 g_6 +g_2 g_8}{g_1 g_2 g_4}=\frac{9}{2},\\
\frac{2 g_6}{g_2 g_4}+19=\frac{2 g_9}{g_1 g_6},\ \frac{g_6}{g_4g_2}+\frac{g_9}{g_8g_2}+7=0.
\end{gather*}
and it does not admit any real solution (one can use the first seven equations to eliminate  $g_2,g_4,g_5,g_6,g_7,g_8,g_9$; the resulting polynomial equations do not admit any real solutions in $g_1$, $g_3$). This example shows that conditions~\ref{eqn:linear_system},~\ref{necessarycondition2} and~\ref{necessarycondition3} are not sufficient for the existence of a solution to \eqref{eqn:normalized_einstein}.
\end{example}

Finally, in the next example we show that  condition~\ref{thm:sigmacompatible:cond:tresigma} of Theorem~\ref{thm:sigmacompatible} is independent of the others.
\begin{example}[Obstruction to the existence of a $\sigma$-diagonal Einstein metric]\label{example:NoSigmaMetric}
Consider the Lie algebra \texttt{952:355} of Example~\ref{example:nonsurjective}. 
Its diagram has an automorphism $\sigma=(1,3)(2,4)(5,6)(8,9)$ of order $2$. Following Theorem~\ref{thm:sigmacompatible}, let $\tran{X}=(-6,-6,4,-3,-7,8,8,-7)$ be the unique solution of $\tran{X} M_\Delta =(1,\dotsc,1)$; then \eqref{eqn:polynomial_eqns} reads:
\begin{gather*}
\frac{g_{5}}{g_{1} g_{2}}=-6,\ \frac{g_{6}}{g_{3} g_{4}}=-6,\ \frac{g_{7}}{g_{1} g_{3}}=-4,\ \frac{g_{7}}{g_{2}g_4}=3,\\
\frac{g_{8}}{g_{1} g_{5}}=-7,\ \frac{g_{8}}{g_{2} g_{3}}=-8,\ \frac{g_{9}}{g_{1} g_{4}}=-8,\ \frac{g_{9}}{g_{3} g_{6}}=-7,
\end{gather*}
but a $\sigma$-diagonal metric $g$ should satisfy $g_1=g_3$, $g_2=g_4$, giving
\[0<g_{7}=3g_2^2,\quad0>g_{7}=-4g_3^2,\]
which is a contradiction. Therefore, this Lie algebra admits no $\sigma$-diagonal solution to \eqref{eqn:normalized_einstein} (although it admits a diagonal Einstein metric, see Table~\ref{table:9DimSurjectiveNormal}). This shows that the necessary condition~\ref{thm:sigmacompatible:cond:tresigma} of Theorem~\ref{thm:sigmacompatible} is independent of the first two conditions.
\end{example}

\section{Contraction limits and central extensions}
Given an element $\sum c_IE_I\in V_\Delta$ that defines a Lie algebra, it is clear that the elements in $V_\Delta$ that define equivalent nice Lie algebras are the elements of the orbit of $\sum c_IE_I$ for the natural action of $\Sigma_n\ltimes D_n$ on $V_\Delta$. The group of diagonal matrices $D_n$ acts on $V_\Delta$ via $e^{M_\Delta}$. Thus, up to permutations, equivalent nice Lie algebras are related by an element of $e^{M_\Delta}(D_n)$; this operation amounts to a component-wise rescaling of the nice basis. Elements in the closure \[\overline{e^{M_\Delta}(D_n)\bigl(\sum c_IE_I\bigr)}\subseteq V_\Delta\] still define nice nilpotent Lie algebras, by continuity. The corresponding diagram could differ from $\Delta$ by having less arrows; if this is the case, the resulting Lie algebra will be said to be a \emph{contraction limit} of $\g$ (see \cite{Goze:OnTheVarieties}).

\begin{proposition}
\label{prop:surjective_type_contraction_limit}
Let $\g$ be a nice nilpotent Lie algebra of surjective type such that $\tran{M_\Delta}X=[1]$ admits a solution. Then either $\g$ or a unique contraction limit of $\g$ has a diagonal metric satisfying \eqref{eqn:normalized_einstein}.
\end{proposition}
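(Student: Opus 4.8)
The plan is to exploit that, for a diagram of surjective type, $e^{M_\Delta}$ maps $D_n$ \emph{onto} $D_m$: this makes the $e^{M_\Delta}(D_n)$-orbit of the structure constants dense in $V_\Delta$, so that every contraction limit of $\g$ is obtained simply by annihilating a subset of the coefficients $c_I$. Granted this, the statement reduces to Theorem~\ref{thm:Sufficient_Condition} and Corollary~\ref{cor:necessary_condition}. First I would record the two facts just mentioned. Since the rows of $M_\Delta$ are linearly independent over $\Z_2$, hence over $\R$, the matrix $M_\Delta$ has full row rank; thus $\tran{M_\Delta}$ is injective and $\tran{M_\Delta} X=[1]$ has a \emph{unique} solution $X=(x_I)$. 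Moreover $M_\Delta\colon\R^n\to\R^m$ is onto, so $e^{M_\Delta}(\R^n_{>0})=\R^m_{>0}$, and surjectivity of $M_{\Delta,2}$ makes $\{\pm1\}^n\to\{\pm1\}^m$ onto; together these give $e^{M_\Delta}(D_n)=D_m$. As $c=\sum_I c_IE_I$ has all $c_I\neq0$ (its diagram is exactly $\Delta$), the orbit $e^{M_\Delta}(D_n)\cdot c$ equals $(\R^*)^{\mathcal{I}_\Delta}$, which is dense in $V_\Delta$; hence for every $S\subseteq\mathcal{I}_\Delta$ the element $c_S:=\sum_{I\notin S}c_IE_I$ lies in $\overline{e^{M_\Delta}(D_n)\cdot c}$ and defines a nice nilpotent Lie algebra $\g_S$ with diagram $\Delta_S$ having arrow set $\mathcal{I}_\Delta\setminus S$. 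For $S\neq\emptyset$ this $\g_S$ is a contraction limit of $\g$.

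Next I would set $S=\{I:x_I=0\}$ and split into cases. If $S=\emptyset$, Theorem~\ref{thm:Sufficient_Condition} applied to $\g$ already yields a diagonal metric satisfying \eqref{eqn:normalized_einstein}. If $S\neq\emptyset$, I would apply the same theorem to $\g_S$: its root matrix $M_{\Delta_S}$ is the submatrix of $M_\Delta$ with the rows indexed by $\mathcal{I}_\Delta\setminus S$, still of full row rank modulo $2$ (a subset of linearly independent rows), so $\g_S$ is of surjective type; and the truncation $X_S:=(x_I)_{I\notin S}$ has all entries nonzero and satisfies $\tran{M_{\Delta_S}} X_S=[1]$, since the discarded rows of $M_\Delta$ are weighted by $x_I=0$. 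Theorem~\ref{thm:Sufficient_Condition} then produces a diagonal Einstein metric on $\g_S$.

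For uniqueness, I would observe that for surjective type every contraction limit of $\g$ is equivalent to some $\g_{S'}$ with $\emptyset\neq S'\subseteq\mathcal{I}_\Delta$: indeed such a limit lies in the orbit closure $\overline{e^{M_\Delta}(D_n)\cdot c}=V_\Delta$, so it has the form $\sum_{I}c'_IE_I$, its diagram is $\Delta_{S'}$ with $S'=\{I:c'_I=0\}\neq\emptyset$, and two nice Lie algebras of surjective type with the same diagram are equivalent. Now suppose $\g_{S'}$ carries a diagonal metric satisfying \eqref{eqn:normalized_einstein}. By Corollary~\ref{cor:necessary_condition} there is a vector $X'$, supported on $\mathcal{I}_\Delta\setminus S'$ with all its entries nonzero, such that $\tran{M_{\Delta_{S'}}} X'=[1]$; extending $X'$ by zero on $S'$ gives a solution of $\tran{M_\Delta} X=[1]$, which by uniqueness must be $X$. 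Hence $S'=\{I:x_I=0\}=S$ and $\g_{S'}=\g_S$. Therefore exactly one of $\g$ and its contraction limits carries a diagonal Einstein metric, which is the assertion.

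I expect the only genuinely substantive step to be the identity $e^{M_\Delta}(D_n)=D_m$ for surjective type, together with the resulting description of contraction limits as coordinate annihilations of the structure constants; the remaining steps are bookkeeping built on Theorems~\ref{thm:sigmacompatible} and~\ref{thm:Sufficient_Condition}.
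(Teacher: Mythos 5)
Your proposal is correct and follows essentially the same route as the paper's proof: apply Theorem~\ref{thm:Sufficient_Condition} directly when all $x_I\neq 0$, and otherwise pass to the contraction limit obtained by deleting exactly the brackets with $x_I=0$, noting that surjectivity of $M_{\Delta,2}$ survives row deletion and that the truncated vector still solves the linear system; uniqueness follows because any Einstein contraction limit forces, via Corollary~\ref{cor:necessary_condition} and the injectivity of $\tran{M_\Delta}$, the same vector $X$ and hence the same surviving index set. You merely spell out in more detail two points the paper leaves implicit, namely $e^{M_\Delta}(D_n)=D_m$ and the identification of all contraction limits with coordinate annihilations up to equivalence.
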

\begin{proof}
If none of the entries of $X$ is zero, we can apply Theorem~\ref{thm:Sufficient_Condition}. Otherwise, we can consider the element
\[\sum_{x_I\neq0} E_I\in V_\Delta;\]
because $e^{M_\Delta}$ is surjective, this is a contraction limit of $\g$ with diagram $\Delta'$. The resulting matrix $M_{\Delta',2}$ is still surjective over $\Z_2$, because it is obtained from $M_{\Delta,2}$ by removing some rows; therefore, we can apply Theorem~\ref{thm:Sufficient_Condition}.

Uniqueness follows from the fact that any Einstein contraction limit leads to the same vector $X$, so the indices $I$ that survive in the limit must be exactly those for which $x_I\neq0$.
\end{proof}

\begin{corollary}
\label{cor:invertible}
Let $\g$ be a nice nilpotent Lie algebra with diagram $\Delta$; if $M_{\Delta,2}$ is invertible, then either $\g$ or a unique contraction limit of $\g$ has a diagonal metric satisfying \eqref{eqn:normalized_einstein}.
\end{corollary}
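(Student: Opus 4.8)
The plan is to deduce Corollary~\ref{cor:invertible} directly from Proposition~\ref{prop:surjective_type_contraction_limit} together with Lemma~\ref{lemma:TracelessDerivations}. First I would observe that if $M_{\Delta,2}$ is invertible, then in particular it is surjective, so $\g$ is of surjective type in the sense defined after Theorem~\ref{thm:Sufficient_Condition}. To apply Proposition~\ref{prop:surjective_type_contraction_limit}, it remains only to verify that the linear system $\tran{M_\Delta}X=[1]$ admits a solution.

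For that step I would argue as in the proof of Theorem~\ref{thm:Sufficient_Condition}: since $M_{\Delta,2}$ is invertible over $\Z_2$, its rows are linearly independent over $\Z_2$, hence over $\Q$, hence over $\R$; moreover it is square, so $M_\Delta$ is an invertible matrix over $\R$. Consequently $\tran{M_\Delta}$ is invertible and $\tran{M_\Delta}X=[1]$ has the (unique) solution $X=(\tran{M_\Delta})^{-1}[1]$. (Alternatively, invertibility of $M_{\Delta,2}$ forces $m=n$ and full rank, so $[1]$ automatically lies in the column span of $\tran{M_\Delta}$, which by Lemma~\ref{lemma:TracelessDerivations} is the statement that all derivations of $\g$ are traceless; but one does not even need this reformulation here.)

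With both hypotheses of Proposition~\ref{prop:surjective_type_contraction_limit} in place, the conclusion is immediate: either $\g$ itself, or a unique contraction limit of $\g$, carries a diagonal metric satisfying \eqref{eqn:normalized_einstein}. This is exactly the assertion of the corollary, so the proof is essentially a one-line invocation once the solvability of the linear system is noted.

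I do not expect a genuine obstacle here, since all the work has been front-loaded into Theorem~\ref{thm:Sufficient_Condition} and Proposition~\ref{prop:surjective_type_contraction_limit}; the only point requiring a moment's care is the passage from invertibility over $\Z_2$ to invertibility over $\R$ (a nonzero determinant mod $2$ is nonzero, but the converse is false, so one must phrase the implication in the correct direction), and the bookkeeping that an invertible $M_{\Delta,2}$ is in particular surjective so that ``surjective type'' applies verbatim.
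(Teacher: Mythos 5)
Your proposal is correct and follows essentially the same route as the paper: both reduce to Proposition~\ref{prop:surjective_type_contraction_limit} after noting that invertibility of $M_{\Delta,2}$ forces $M_\Delta$ to be invertible over $\R$ (the paper phrases this via $\det M_\Delta \equiv 1 \bmod 2$, you via linear independence of rows over $\Z_2$, hence over $\Q$ and $\R$ --- the same observation), yielding the explicit solution $X=\tran{M_\Delta^{-1}}[1]$.
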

\begin{proof}
If $M_{\Delta,2}$ is invertible, then
\[1=\det M_{\Delta,2}=\det M_\Delta\mod 2;\]
therefore $M_\Delta$ is invertible and we can apply Proposition~\ref{prop:surjective_type_contraction_limit} with $X$ given by $X=\tran{M_\Delta^{-1}}[1]$.
\end{proof}

\begin{example}
\label{esempio:MaxRank}
Consider the 8-dimensional nice Lie algebra \texttt{842:117}:
\[(0,0,0,0,e^{12},e^{34},e^{15}+e^{24}+e^{36},e^{13}+e^{25}+e^{46}).\]
Its root matrix is
\[M_\Delta=\left(
\begin{array}{cccccccc}
 -1 & -1 & 0 & 0 & 1 & 0 & 0 & 0 \\
 0 & 0 & -1 & -1 & 0 & 1 & 0 & 0 \\
 -1 & 0 & 0 & 0 & -1 & 0 & 1 & 0 \\
 0 & -1 & 0 & -1 & 0 & 0 & 1 & 0 \\
 0 & 0 & -1 & 0 & 0 & -1 & 1 & 0 \\
 -1 & 0 & -1 & 0 & 0 & 0 & 0 & 1 \\
 0 & -1 & 0 & 0 & -1 & 0 & 0 & 1 \\
 0 & 0 & 0 & -1 & 0 & -1 & 0 & 1 \\
\end{array}
\right)\]
which has determinant $9$. In this case $\tran{X}=(-5,-5,-3,7,-3,7,-3,-3)$, so Corollary~\ref{cor:invertible} implies the existence of an Einstein metric of nonzero scalar curvature.
\end{example}

\begin{example}
Consider the Lie algebra $\g$ with structure constants
\[(0,0,0,0,e^{12},e^{34},e^{13}+e^{24},e^{15}+e^{23},e^{14}+e^{36}+e^{25}).\] 
In this case, the root matrix has determinant $9$, and so satisfies the hypothesis of Corollary~\ref{cor:invertible}; however, the resulting vector $X$ turns out to have a component equal to zero, corresponding to the bracket $[e_2,e_5]=e_9$. Therefore, $\g$ does not have any diagonal metric satisfying \eqref{eqn:normalized_einstein}, and its only contraction limit that carries such a metric is obtained by setting $[e_2,e_5]=0$, giving back the Lie algebra of Example~\ref{example:nonsurjective}.

Notice that up to reordering the rows the root matrix of $\g$ can be written in the form
\[
 M_{\Delta'}= \begin{pmatrix} M_\Delta \\ \alpha \end{pmatrix}, \qquad
 \alpha = \begin{pmatrix}
            0 & -1 & 0 & 0 & -1 & 0 & 0&0 & 1 \\
          \end{pmatrix},
\]
with $M_\Delta$ given by \eqref{eqn:nonsurjective}, so the fact  that $(1,\dots, 1)$ is in the span of the first $8$ rows actually follows from the computations of Example~\ref{example:nonsurjective}.

We observe that $\g$ is equivalent to \texttt{952:384} of Table~\ref{9dimObstructedSurjective}. 
\end{example}

Another natural operation on a nice nilpotent Lie algebra $\g$ is taking a central extension (see e.g. \cite{Gong}); in general, this means constructing an exact sequence
\[0\to \R^k\to \lie{h}\to \g\to 0\]
where $\R^k$ denotes the $k$-dimensional Abelian Lie algebra, and its image in $\lie{h}$ is contained in the center. Since we are interested in Einstein metrics, it is natural to require the center to be contained in the derived Lie algebra (see \cite[Lemma~3.1]{ContiRossi:EinsteinNilpotent}); in terms of diagrams, we add $k$ nodes and require each to be reached by a pair of arrows starting from the existing nodes.

We will concentrate on the case $k=1$. More precisely, given $e_i,e_j$ in the nice basis such that
\begin{equation}
 \label{eqn:needed_for_central_extension}
[e_i,e_j]=0, \quad de^{ij}=0,
\end{equation}
we add a node $r$ to the diagram and declare $de^r=e^{ij}$. Notice that the Jacobi condition in the form $d^2=0$ is automatically satisfied. We will say that the resulting Lie algebra is obtained by a \emph{one-bracket extension}.

\begin{lemma}
Any one-bracket extension of a nice nilpotent Lie algebra of surjective type is also a nice nilpotent Lie algebra of surjective type.
\end{lemma}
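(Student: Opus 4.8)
The plan is to check directly that the one-bracket extension $\lie h$ of $\g$ satisfies the axioms of a nice nilpotent Lie algebra, and then to deduce surjectivity of its root matrix over $\Z_2$ from a simple block decomposition. Let $\Delta$ be the diagram of $\g$ and let $\Delta'$ be obtained by adjoining a new node $r$ together with the two arrows $i\xrightarrow{j}r$ and $j\xrightarrow{i}r$, which appear as a pair because of the relation $[e_i,e_j]=e_r$ (equivalently $e_i\hook de^r=e^j$ and $e_j\hook de^r=-e^i$). Since $e_r$ lies in the center, it occurs in no bracket as a factor and only as the value of $[e_i,e_j]$, so $r$ is a sink of $\Delta'$; hence $\Delta'$ is acyclic and $\lie h$ is nilpotent. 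The Jacobi identity holds automatically, as noted above, because $de^{ij}=0$ gives $d^2e^r=d(e^{ij})=0$.

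The first real point is to verify that $\Delta'$ is a nice diagram. The defining conditions that are not inherited verbatim from $\Delta$ are those involving the two new arrows. Here the hypothesis $[e_i,e_j]=0$ in $\g$ is exactly what is needed: it guarantees that $\Delta$ contains no arrow with source $i$ and label $j$, and (by skew-symmetry) none with source $j$ and label $i$. Consequently each new arrow shares with any arrow of $\Delta$ at most its source or its label — never both, and never its destination $r$, which is new — while the two new arrows share with each other only their destination $r$. Together with the evident facts that every bracket $[e_a,e_b]$ and every $e_a\hook de^b$ is still a multiple of a single basis vector, this shows that $\Delta'$ is a nice diagram, so $\lie h$ is a nice nilpotent Lie algebra.

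It remains to check surjectivity of $M_{\Delta',2}$. Ordering the nodes of $\Delta'$ with $r$ last, the new index $\{\{i,j\},r\}$ is the maximal element of $\mathcal{I}_{\Delta'}$, so its row $-e^i-e^j+e^r$ is the last row of $M_{\Delta'}$; writing $M_\Delta$ as an $m\times n$ matrix, this gives the block form
\[
M_{\Delta'}=\begin{pmatrix} M_\Delta & 0 \\ -e^i-e^j & 1\end{pmatrix},
\]
where the zero block records that no bracket of $\g$ involves $e_r$ and the bottom-right entry $1$ is the $e^r$-coefficient of the last row. Reducing mod $2$ and using that $\g$ is of surjective type: given $(b,b_0)\in\Z_2^{m}\oplus\Z_2$, choose $x'\in\Z_2^n$ with $M_{\Delta,2}\,x'=b$ and put $x_0=b_0+(e^i+e^j)x'\in\Z_2$; then $(x',x_0)$ is a preimage of $(b,b_0)$ under $M_{\Delta',2}$. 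Hence $M_{\Delta',2}$ is surjective and $\lie h$ is again of surjective type.

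I do not anticipate a genuine obstacle: the extension is ``small'', and the root matrix grows only by one row and one column, both of which are explicit. The only step demanding real care is the verification, in the second paragraph, of the ``at most one of source, destination, label in common'' condition (and the other diagram axioms of \cite{ContiRossi:Construction}) for the two new arrows, which is precisely the place where the hypotheses~\eqref{eqn:needed_for_central_extension} are used.
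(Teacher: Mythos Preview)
Your proof is correct and follows essentially the same route as the paper: the paper simply asserts that the extension is nice and nilpotent and then displays the block form $M_{\Delta'}=\begin{pmatrix} M_\Delta & 0\\ * & 1\end{pmatrix}$, from which surjectivity over $\Z_2$ is immediate. You have merely unpacked the verification of the nice-diagram axioms and the surjectivity argument in more detail than the paper bothers to give.
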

\begin{proof}
Let $\g$ be of surjective type and let $\g'$ be a one-bracket extension; it is clear that $\g'$ is nice and nilpotent. The root matrices are related by
\[
M_{\Delta'}=\begin{pmatrix}
 M_\Delta & 0\\
 * & 1
\end{pmatrix}.\qedhere\]
\end{proof}
\begin{remark}
An $n$-dimensional nice Lie algebra $\g$ with diagram $\Delta$ can be realized as a one-bracket extension of a nice Lie algebra of dimension $n-1$ if and only if $M_\Delta$ has a column whose only nonzero entry equals $1$, or equivalently if a node in $\Delta$ has two incoming arrows and no outgoing arrow; indeed, if $e_r$ is in the center, then $\g$ is a central extension of the Lie algebra $\g/\Span{e_r}$.

It would be interesting to classify nice nilpotent Lie algebras of surjective type that cannot be realized as one-bracket extensions.
\end{remark}

If in addition to \eqref{eqn:needed_for_central_extension} the one-form  $e^i$ is closed, the extension procedure can be iterated replacing $e^j$ with $e^r$, yielding a tower of one-bracket extensions of surjective type. This leads to the following:
\begin{theorem}
\label{thm:existinalldimensions}
For each $n\geq8$, there exist $n$-dimensional nice nilpotent Lie algebras with a diagonal metric satisfying \eqref{eqn:normalized_einstein}.
\end{theorem}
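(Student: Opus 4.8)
The plan is to produce the required Lie algebras by iterating the one-bracket extension construction from a single $8$-dimensional seed. As seed I would take the Lie algebra \texttt{842:117} of Example~\ref{esempio:MaxRank}, with structure constants $(0,0,0,0,e^{12},e^{34},e^{15}+e^{24}+e^{36},e^{13}+e^{25}+e^{46})$; call it $\g_8$. Its root matrix $M_\Delta$ is square of determinant $9$, hence invertible over $\Z_2$; the corresponding solution of $\tran{M_\Delta}X=[1]$, recorded in that example as $\tran{X}=(-5,-5,-3,7,-3,7,-3,-3)$, has no zero entry; so by Corollary~\ref{cor:invertible} (or Theorem~\ref{thm:Sufficient_Condition}) $\g_8$ already carries a diagonal metric satisfying \eqref{eqn:normalized_einstein}. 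This settles $n=8$.

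For $n>8$ I would build a tower of one-bracket extensions $\g_8\subset\g_9\subset\g_{10}\subset\cdots$, all performed along the closed one-form $e^1$. In $\g_8$ one has $de^1=0$, $[e_1,e_4]=0$ and $de^{14}=0$, so the pair $(e_1,e_4)$ satisfies \eqref{eqn:needed_for_central_extension} and we may adjoin a node $e_9$ with $de^9=e^{14}$. For $m\geq9$, once $\g_m$ is built, the last adjoined node $e_m$ is central in $\g_m$ (it is a sink of the diagram), $e^1$ is still closed, and $de^{1,m}=-e^1\wedge de^m=0$ since $de^m=e^1\wedge\beta$ for some one-form $\beta$; hence $(e_1,e_m)$ again satisfies \eqref{eqn:needed_for_central_extension}, and adjoining $e_{m+1}$ with $de^{m+1}=e^{1,m}$ produces $\g_{m+1}$. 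By the Lemma on one-bracket extensions each $\g_m$ is a nice nilpotent Lie algebra, and $\dim\g_m=m$.

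The decisive point is that a one-bracket extension keeps the root matrix square and preserves its determinant: the Lemma exhibits the block form $M_{\Delta'}=\left(\begin{smallmatrix}M_\Delta & 0\\ * & 1\end{smallmatrix}\right)$, so $\det M_{\Delta_m}=\det M_{\Delta_8}=9$ for every $m$, and in particular $M_{\Delta_m}$ is invertible over $\Z_2$. Corollary~\ref{cor:invertible} then applies to each $\g_m$: either $\g_m$ or a unique contraction limit of $\g_m$ admits a diagonal metric satisfying \eqref{eqn:normalized_einstein}. Since a contraction limit of $\g_m$ is again an $m$-dimensional nice nilpotent Lie algebra, in every case we obtain an $m$-dimensional example, which proves the claim for all $n=m\geq8$.

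There is no genuinely hard step: the argument assembles the Lemma on one-bracket extensions, the block-determinant identity, and Corollary~\ref{cor:invertible}. The points that need care are the verification that $(e_1,e_m)$ satisfies \eqref{eqn:needed_for_central_extension} at every stage — which reduces to $e^1$ never being modified and a freshly adjoined node remaining a sink until the next step — and the choice of a seed possessing all the needed features simultaneously (invertible root matrix, a diagonal Einstein metric, and a closed basis one-form taking part in a vanishing bracket that can be propagated up the tower). The one conceptual subtlety I would flag is that Corollary~\ref{cor:invertible} may place the Einstein metric on a contraction limit of $\g_m$ rather than on $\g_m$ itself; this is harmless, since the theorem only asserts the existence of some $n$-dimensional nice nilpotent Lie algebra with such a metric and contractions preserve dimension, but obtaining the metric on a Lie algebra with the full diagram $\Delta_m$ would instead require tracking $X$ along the tower to find a member whose solution avoids every coordinate hyperplane.
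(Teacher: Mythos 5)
Your proposal is correct and follows essentially the same route as the paper: the same seed \texttt{842:117}, the same tower of one-bracket extensions $de^{9}=e^{14}$, $de^{k+8}=e^1\wedge e^{k+7}$, the block-triangular form of the root matrix to preserve invertibility of $M_{\Delta,2}$, and Corollary~\ref{cor:invertible} to conclude. Your extra checks (that $(e_1,e_m)$ satisfies \eqref{eqn:needed_for_central_extension} at each stage, and that a contraction limit keeps the dimension) are correct and merely make explicit what the paper leaves implicit.
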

\begin{proof}
Let $\g$ be the  $8$-dimensional Lie algebra of Example~\ref{esempio:MaxRank}; it is straightforward to verify that $M_{\Delta,2}$ is invertible.

Define a one-bracket extension $\g_1$ by setting $de^{9}=e^{14}$ (appearing as \texttt{952:394} in Table~\ref{table:9DimSurjectiveNormal}); inductively, extend $\g_k$ to $\g_{k+1}$ by declaring $de^{k+8}=e^1\wedge e^{k+7}$. By construction, the root matrix $M_{\Delta_k,2}$ is invertible. By Corollary~\ref{cor:invertible}, for each $k$ either $\g_k$ or a contraction limit of $\g_k$ has a diagonal metric satisfying \eqref{eqn:normalized_einstein}.
\end{proof}

\section{Explicit Einstein metrics}
In dimension $8$ there are only $4$ diagrams, corresponding to $6$ inequivalent nice Lie algebras, that satisfy the necessary condition \ref{eqn:linear_system} for the existence of a solution to \eqref{eqn:normalized_einstein}, i.e. where all derivations are traceless. It turns out that they all admit a diagonal Einstein metric. These Lie algebras and their Einstein metrics are listed in Table~\ref{table:8DimNiceEinsteinDiagonal}, where the structure constant are relative to an orthonormal nice basis, i.e. satisfying $\langle e_i,e_j\rangle=\pm\delta_{ij}$. Note that the Lie algebra \texttt{842:121b} and its Einstein diagonal metrics were already present in \cite[Theorem 5.2]{ContiRossi:EinsteinNilpotent} as the first examples of non-Ricci-flat Einstein invariant metrics on a nilpotent Lie group. 

Theorem~\ref{thm:sigmacompatible} can be used directly to construct these diagonal Einstein metrics, as shown in the following example.

\begin{example}[Less nonzero brackets than dimension, $M_\Delta$ surjective]\label{ex:952:355DiagEinstein}  
Take the Lie algebra \texttt{952:355} of Example~\ref{example:nonsurjective}.
The system \eqref{eqn:polynomial_eqns} has the one-parameter family of solutions
\begin{gather*}
g_1 = \frac{4}{21},\ g_3 = \frac{4}{21},\ g_4= -\frac{64}{1323 g_2},\ g_5= -\frac{8}{7} g_2,\\
g_6=\frac{512}{9261 g_2},\ g_7=\frac{64}{441},\ g_8=\frac{32 g_2}{21},\ g_9= -\frac{2048}{27783 g_2}.
\end{gather*}
Notice however that all these metrics are related by an equivalence, more precisely an element of $\ker e^{M_\Delta}$, to a solution with $g_2=\pm1$; thus, there are essentially two distinct solutions to \eqref{eqn:normalized_einstein} on this Lie algebra. Indeed, there is a unique vector $X$ satisfying Theorem~\ref{thm:Sufficient_Condition}, because $\tran{M_\Delta}$ is injective, and $X$ determines the metric up to equivalence and the action of $G_\id=\ker M_{\Delta,2}$.
\end{example}
In the same way, Theorem~\ref{thm:sigmacompatible} can be used to determine $\sigma$-diagonal Einstein metrics, as illustrated in the following example.

\begin{example}\label{ex:8dimSIGMA} 
Consider the Lie algebra \texttt{842:117} (see Example~\ref{esempio:MaxRank})
which admits the automorphism $\sigma =(1,3)(2,4)(5,6)$. Fix the nice basis and let $g$ vary among metrics that satisfy $\sigma g = g$. Then condition~\ref{thm:sigmacompatible:cond:dueimmagine} in Theorem~\ref{thm:sigmacompatible} reduces  to $g$ satisfying
\begin{gather*}
\frac{g_{56}}{g_{13} g_{24}}=-5,\ \frac{g_{56}}{g_{13} g_{24}}=-5,\ \frac{g_{77}}{g_{13} g_{56}}=-3,\ \frac{g_{77}}{g_{13} g_{56}}=-3,\\
\frac{g_{77}}{g_{24}^2}+7=0,\ \frac{g_{88}}{g_{24} g_{56}}=-3,\ \frac{g_{88}}{g_{24} g_{56}}=-3,\ \frac{g_{88}}{g_{13}^2}+7=0.
\end{gather*}
These equations have the unique solution
\[g_{13}=-\frac{7}{15},\ g_{24}=-\frac{7}{15},\ g_{56}=-\frac{49}{45},\ g_{77}=-\frac{343}{225},\ g_{88}=-\frac{343}{225}.\]
\end{example}

It is sometimes more convenient to proceed in a different way, by assuming that the basis $\{e_i\}$ is orthonormal, so that the metric is defined by a vector $g\in(\Z^*)^n$, and allowing the structure constants to vary. Solving equation~\eqref{eqn:polynomial_eqns} amounts to solving
\[ M_{\Delta,2}\logsign g =\logsign x_I,\]
and imposing the Jacobi identity on the structure constants $c_I=\pm\sqrt{\abs{x_I}}$. Some of the sign ambiguities in the structure constants can be eliminated by choosing representatives up to equivalence.

\begin{example}\label{ex:842:117VariConst}
In the case of the Lie algebra \texttt{842:117} of Example~\ref{esempio:MaxRank}, the above procedure shows that the metric must have signature $(++++--++)$ and the structure constants are given by
\[(0,0,0,0,\pm\sqrt{5}e^{12},\pm\sqrt{5}e^{34},\pm\sqrt{3}e^{15}\pm\sqrt{7}e^{24}\pm\sqrt{3}e^{36},\pm\sqrt{7}e^{13}\pm\sqrt{3}e^{25}\pm\sqrt{3}e^{46}).\]
Since $M_{\Delta,2}$ is surjective, all sign ambiguities can be eliminated, i.e. each solution is equivalent to
\[(0,0,0,0,\sqrt{5}e^{12},\sqrt{5}e^{34},\sqrt{3}e^{15}+\sqrt{7}e^{24}+\sqrt{3}e^{36},\sqrt{7}e^{13}+\sqrt{3}e^{25}+\sqrt{3}e^{46}).\]
\end{example}

\begin{table}[thp]
{\setlength{\tabcolsep}{2pt}
\centering
\caption{8-dimensional nice Lie algebras with a diagonal Einstein metric\label{table:8DimNiceEinsteinDiagonal}}
\begin{footnotesize}
\begin{tabular}{>{\ttfamily}c C C C}
\toprule
\textnormal{Name} & \g & \text{Metric}&\text{Sign.}\\
\midrule
86532:6 & \multicolumn{1}{L}{0,0,\sqrt{5}e^{12},-\sqrt{\frac{5}{2}}e^{13},\sqrt{\frac{5}{2}}e^{23},-\sqrt{5}e^{15}+\sqrt{5}e^{24},} & (++---+--) & (3,5) \\ 
 & \multicolumn{1}{R}{\sqrt{\frac{11}{2}}e^{16}+2 \sqrt{3}e^{25}+\sqrt{\frac{11}{2}}e^{34},2 \sqrt{3}e^{14}+\sqrt{\frac{11}{2}}e^{26}+\sqrt{\frac{11}{2}}e^{35}} & & \\[5pt]
 & \multicolumn{1}{L}{0,0,4 \sqrt{3}e^{12},-\sqrt{\frac{5}{2}}e^{13},\sqrt{\frac{5}{2}}e^{23},-3 \sqrt{\frac{7}{2}}e^{15}+3 \sqrt{\frac{7}{2}}e^{24},}& (+++++-++) & (7,1)\\
 & \multicolumn{1}{R}{4 \sqrt{2}e^{16}+2 \sqrt{3}e^{25}+\sqrt{21}e^{34},2 \sqrt{3}e^{14}+4 \sqrt{2}e^{26}+\sqrt{21}e^{35}} & & \\[5pt]
8531:60a & \multicolumn{1}{L}{0,0,0,\frac{3}{\sqrt{2}}e^{12},2 \sqrt{2}e^{13},\sqrt{\frac{13}{2}}e^{24},} & (++++--++) & (6,2)\\ 
& \multicolumn{1}{R}{\sqrt{\frac{15}{2}}e^{15}+\sqrt{\frac{17}{2}}e^{23},\sqrt{10}e^{14}+\sqrt{\frac{15}{2}}e^{26}+\sqrt{\frac{3}{2}}e^{35}} & (++-++--+) & (5,3)\\[5pt]
8531:60b & \multicolumn{1}{L}{0,0,0,-\frac{3}{\sqrt{2}}e^{12},2 \sqrt{2}e^{13},\sqrt{\frac{13}{2}}e^{24},} & (++++--++) & (6,2)\\ 
 & \multicolumn{1}{R}{\sqrt{\frac{15}{2}}e^{15}+\sqrt{\frac{17}{2}}e^{23},\sqrt{10}e^{14}+\sqrt{\frac{15}{2}}e^{26}+\sqrt{\frac{3}{2}}e^{35}} & (++-++--+) & (5,3)\\[5pt]
842:117  & \multicolumn{1}{L}{0,0,0,0,\sqrt{5}e^{12},\sqrt{5}e^{34},}& (++++--++) & (6,2)\\ 
 & \multicolumn{1}{R}{\sqrt{3}e^{15}+\sqrt{7}e^{24}+\sqrt{3}e^{36},\sqrt{7}e^{13}+\sqrt{3}e^{25}+\sqrt{3}e^{46}} & &\\[5pt]
842:121a & \multicolumn{1}{L}{0,0,0,0,\sqrt{\frac{5}{2}}e^{13}-\sqrt{\frac{5}{2}}e^{24},\sqrt{\frac{5}{2}}e^{12}+\sqrt{\frac{5}{2}}e^{34},} & (++++--++) & (6,2)\\
 & \multicolumn{1}{R}{\sqrt{7}e^{14}+\sqrt{3}e^{25}+\sqrt{3}e^{36},\sqrt{3}e^{15}+\sqrt{7}e^{23}+\sqrt{3}e^{46}} & (++--+---) & (3,5)\\[5pt]
842:121b & \multicolumn{1}{L}{0,0,0,0,-\sqrt{\frac{5}{2}}e^{13}+\sqrt{\frac{5}{2}}e^{24},-\sqrt{\frac{5}{2}}e^{12}+\sqrt{\frac{5}{2}}e^{34},} & (++++--++) & (6,2)\\
 & \multicolumn{1}{R}{\sqrt{7}e^{14}+\sqrt{3}e^{25}+\sqrt{3}e^{36},\sqrt{3}e^{15}+\sqrt{7}e^{23}+\sqrt{3}e^{46}} & (++--+---) & (3,5)\\
\bottomrule
\end{tabular}
\end{footnotesize}
}
\end{table}

\begin{theorem}
\label{thm:EinsteinMetrics8}
A nice nilpotent Lie algebra of dimension $8$ has a diagonal Einstein metric of positive scalar curvature if and only if it appears in Table~\ref{table:8DimNiceEinsteinDiagonal}; it has a diagram involution $\sigma$ and a $\sigma$-diagonal  Einstein metric of positive scalar curvature if and only if it appears in Table~\ref{table:8DimNiceEinsteinSigma}. Both tables contain all metrics with this property up to rescaling.
\end{theorem}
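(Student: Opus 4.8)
The plan is to prove Theorem~\ref{thm:EinsteinMetrics8} by a finite, algorithmic case analysis built on the classification of \cite{ContiRossi:Construction} and on the reduction to linear algebra furnished by Lemma~\ref{lemma:TracelessDerivations}, Corollary~\ref{cor:necessary_condition} and Theorem~\ref{thm:sigmacompatible}. First I would enumerate the $8$-dimensional nice nilpotent Lie algebras from the classification and discard all of them that fail condition~\ref{eqn:linear_system}, i.e.\ those whose root matrix $M_\Delta$ does not have $[1]$ in the span of its columns; by Lemma~\ref{lemma:TracelessDerivations} these have a nonzero trace derivation and hence (by Remark~\ref{remark:solvable_extension} and \cite{ContiRossi:EinsteinNilpotent}) admit no metric satisfying \eqref{eqn:normalized_einstein}. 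The excerpt already asserts that this leaves exactly $4$ diagrams and $6$ inequivalent Lie algebras, so the bulk of the theorem is then a statement about these finitely many algebras.

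For the diagonal case ($\sigma=\id$): for each of the $6$ surviving Lie algebras I would solve the linear system $\tran M_\Delta X=[1]$, determine the (finite-dimensional affine) solution set, and then check conditions~\ref{necessarycondition2} and~\ref{necessarycondition3} of Corollary~\ref{cor:necessary_condition}. Where $M_{\Delta,2}$ is surjective, Theorem~\ref{thm:Sufficient_Condition} produces a diagonal Einstein metric directly from any $X$ with no zero entry, and the explicit metric is recovered by solving \eqref{eqn:polynomial_eqns}; where $M_\Delta$ is not surjective (the cases \texttt{86532:6}, \texttt{842:121a}, \texttt{842:121b}) one must still exhibit a solution of the overdetermined polynomial system \eqref{eqn:polynomial_eqns}, which I would do by direct computation, producing precisely the metrics tabulated in Table~\ref{table:8DimNiceEinsteinDiagonal}. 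Uniqueness up to rescaling and up to the equivalence/$G_\id$-action is exactly the last clause of Theorem~\ref{thm:sigmacompatible}: the $G_\sigma\times H$-orbit is determined by $X$, and scaling the metric accounts for the remaining freedom; when the solution $X$ is unique this immediately pins down the metric up to rescaling, and when it is not (Table has two entries for \texttt{86532:6}, \texttt{8531:60a}, \texttt{8531:60b}, \texttt{842:121a}, \texttt{842:121b}, as reflected by the two signature columns) one enumerates the finitely many admissible $X$ or the finitely many sign choices permitted by~\ref{thm:sigmacompatible:cond:tresigma}.

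For the $\sigma$-diagonal case one runs through all diagram involutions $\sigma\in\Aut(\Delta)$ of each surviving diagram — a finite list, since $\Aut(\Delta)\subseteq\Sigma_8$ — and for each applies the full strength of Theorem~\ref{thm:sigmacompatible}: solve $\tran M_\Delta X=[1]$ (choosing, via \eqref{eqn:epsilonsigma}, a $\sigma$-invariant solution), then test whether $(x_I/(c_I\tilde c_I))$ lies in $e^{M_\Delta}(D_n)$ with a $\sigma$-invariant preimage $g$. Example~\ref{example:NoSigmaMetric} shows that this last step genuinely can fail (\texttt{952:355} in dimension $9$, but the same mechanism occurs in dimension $8$), so the work here is to carry out the check for each $(\Delta,\sigma)$ and record the outcomes in Table~\ref{table:8DimNiceEinsteinSigma}, again noting that every orbit is determined by $X$ so the classification up to rescaling is automatic. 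I expect the main obstacle to be the non-surjective cases: there the polynomial system \eqref{eqn:polynomial_eqns} is overdetermined and neither Theorem~\ref{thm:Sufficient_Condition} nor Proposition~\ref{prop:surjective_type_contraction_limit} applies, so one must argue existence (or nonexistence) of a genuine solution by hand — eliminating variables and reducing to a univariate or bivariate polynomial condition, exactly as in the dimension-$9$ obstruction examples — and, symmetrically, verify that the algebras \emph{not} listed really do fail one of \ref{necessarycondition2}, \ref{necessarycondition3}, or the solvability of \eqref{eqn:polynomial_eqns}. Since all the relevant linear and polynomial systems have small size and rational data, these verifications are finite and can be certified explicitly, which is what Tables~\ref{table:8DimNiceEinsteinDiagonal} and~\ref{table:8DimNiceEinsteinSigma} do.
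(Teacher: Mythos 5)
Your proposal is correct and follows essentially the same route as the paper: enumerate the $8$-dimensional nice nilpotent Lie algebras from the classification, discard those failing the traceless-derivation condition via Lemma~\ref{lemma:TracelessDerivations}, and then settle each of the remaining finitely many cases (and, for the second assertion, each diagram involution) by solving the linear system and the polynomial system \eqref{eqn:polynomial_eqns} as in Theorem~\ref{thm:sigmacompatible}, with uniqueness up to rescaling coming from the orbit statement there. The only cosmetic difference is that the paper carries out the polynomial step in the normalization of Example~\ref{ex:842:117VariConst} (orthonormal basis, varying structure constants) rather than solving directly for the $g_i$, which is an equivalent computation.
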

\begin{proof}
Using the classification of nice nilpotent Lie algebras of dimension $8$ in \cite{ContiRossi:Construction} and applying Lemma~\ref{lemma:TracelessDerivations}, we find that those with traceless derivations are exactly those listed in Table~\ref{table:8DimNiceEinsteinDiagonal}. For each of these Lie algebras, we apply the method of Example~\ref{ex:842:117VariConst} to determine diagonal solutions of \eqref{eqn:normalized_einstein}.

To classify $\sigma$-diagonal solutions, for each Lie algebra we consider the elements $\sigma$ of order two in the group of automorphisms $\Aut(\Delta)$ and proceed as in Example~\ref{ex:8dimSIGMA}. For each entry in the table, we list the nonzero components of the invariant Einstein metric $(g_{ij})$.
\end{proof}
\begin{remark}
A case-by-case inspection shows that the nice Lie algebras appearing in Theorem~\ref{thm:EinsteinMetrics8} are equivalent to nice Lie algebras with rational structure constants; therefore, they determine Einstein metrics on a compact nilmanifold.
\end{remark}

\begin{table}[thp]
{\setlength{\tabcolsep}{2pt}
\centering
\begin{footnotesize}
\caption{8-dimensional nice Lie algebra with a $\sigma$-diagonal Einstein metric\label{table:8DimNiceEinsteinSigma}}
\begin{tabular}{>{\ttfamily}c C C}
\toprule
\textnormal{Name} & \multicolumn{2}{C}{\g} \\
$\sigma$ & \text{Metric} &\text{Sign.} \\
\midrule
86532:6 & \multicolumn{2}{C}{0,0,e^{12},- e^{13},e^{23},-e^{15}+e^{24}, e^{16}+e^{25}+e^{34},e^{14}+e^{26}+e^{35}} \\
\textnormal{(1,2)(4,5)(7,8)} & (g_{12},g_{33}, g_{45},g_{66},g_{78}) =\left(-\frac{24}{55},\frac{576}{605},\frac{6912}{6655},-\frac{165888}{73205},-\frac{1990656}{366025}\right)& (4,4)\\[2pt]
&(g_{12},g_{33}, g_{45},g_{66},g_{78}) =\left(-\frac{1}{84}, -\frac{1}{147},-\frac{5}{24696},\frac{5}{65856},\frac{5}{172872}\right)& (4,4)\\[8pt]
842:117  & \multicolumn{2}{C}{0,0,0,0,e^{12},e^{34},e^{15}+e^{24}+e^{36},e^{13}+e^{25}+e^{46}}\\
\textnormal{(1,3)(2,4)(5,6)}&(g_{13},g_{24},g_{56},g_{77},g_{88})=\left(-\frac{7}{15},-\frac{7}{15},-\frac{49}{45},-\frac{343}{225},-\frac{343}{225}\right)&(3,5)\\[5pt]
\textnormal{(1,2)(3,4)(7,8)}&(g_{12},g_{34},g_{55},g_{66},g_{78})=\left(-\frac{7}{15},-\frac{7}{15},\frac{49}{45},\frac{49}{45},\frac{343}{225}\right)&(5,3)\\[5pt]
\textnormal{(1,4)(2,3)(5,6)(7,8)}&(g_{14},g_{23},g_{56},g_{78})=\left(\frac{7}{15},\frac{7}{15},\frac{49}{45},-\frac{343}{225}\right)&(4,4)\\[8pt]
842:121a & \multicolumn{2}{C}{0,0,0,0,e^{13}-e^{24},e^{12}+e^{34},e^{14}+e^{25}+e^{36},e^{15}+e^{23}+e^{46}} \\
\textnormal{(1,2)(3,4)(7,8)}&(g_{12},g_{34},g_{55},g_{66},g_{78})=\left(-\frac{14}{15},-\frac{14}{15},\frac{98}{45},\frac{98}{45},\frac{1372}{225}\right) & (5,3)\\[2pt]
& (g_{12},g_{34},g_{55},g_{66},g_{78})=\left(-\frac{14}{15},\frac{14}{15},-\frac{98}{45},\frac{98}{45},-\frac{1372}{225}\right)&(4,4) \\[8pt]
842:121b & \multicolumn{2}{C}{0,0,0,0,-e^{13}+e^{24},-e^{12}+e^{34},e^{14}+e^{25}+e^{36},e^{15}+e^{23}+e^{46}} \\  
\textnormal{(1,2)(3,4)(7,8)}&(g_{12},g_{34},g_{55},g_{66},g_{78})=\left(-\frac{14}{15},-\frac{14}{15},\frac{98}{45},\frac{98}{45},\frac{1372}{225}\right)& (5,3)\\[2pt]
&(g_{12},g_{34},g_{55},g_{66},g_{78})=\left(-\frac{14}{15},\frac{14}{15},-\frac{98}{45},\frac{98}{45},-\frac{1372}{225}\right)&(4,4) \\
\bottomrule
\end{tabular}
\end{footnotesize}
}
\end{table}

In dimension $9$, there are $130$ nice diagrams that contain a Lie algebra that satisfies the necessary conditions~\ref{eqn:linear_system} and \ref{necessarycondition2} of corollary~\ref{cor:necessary_condition}. In addition, the computations involved in determining the actual Einstein metric become more complicated, as evident in the following:
\begin{example}
\label{esempio:MoreBrackets}
Take the Lie algebra \texttt{952:782}:
\[(0,0,0,0,e^{13}+e^{24},- e^{12},e^{34},e^{15}+e^{23}+e^{46},e^{14}+e^{27}+e^{35}).\]
It is easy to see that the matrix
\[M_\Delta=\left(
\begin{array}{ccccccccc}
 -1 & 0 & -1 & 0 & 1 & 0 & 0 & 0 & 0 \\
 0 & -1 & 0 & -1 & 1 & 0 & 0 & 0 & 0 \\
 -1 & -1 & 0 & 0 & 0 & 1 & 0 & 0 & 0 \\
 0 & 0 & -1 & -1 & 0 & 0 & 1 & 0 & 0 \\
 -1 & 0 & 0 & 0 & -1 & 0 & 0 & 1 & 0 \\
 0 & -1 & -1 & 0 & 0 & 0 & 0 & 1 & 0 \\
 0 & 0 & 0 & -1 & 0 & -1 & 0 & 1 & 0 \\
 -1 & 0 & 0 & -1 & 0 & 0 & 0 & 0 & 1 \\
 0 & -1 & 0 & 0 & 0 & 0 & -1 & 0 & 1 \\
 0 & 0 & -1 & 0 & -1 & 0 & 0 & 0 & 1 \\
\end{array}
\right)	;\] has maximal rank  over both  $\Z_2$ and $\R$. It turns out that $(1,\dotsc, 1)$ is in the span of the rows, and we can find two Einstein metrics:
\begin{gather*}
g_2=\frac{3}{16} \left(\pm\sqrt{249} g_1^2-9 g_1^2\right),\ g_3 = \frac{731 \pm 47 \sqrt{249}}{2205 g_1},\\
g_4=\frac{131253\pm8321 \sqrt{249}}{463050 g_1^2},\ g_5=\frac{-1}{735} \left(\pm47 \sqrt{249}+731\right),\\
g_6=\frac{-9}{16} \left(\pm5 \sqrt{249} g_1^3-73 g_1^3\right),\ g_7=\frac{-16 \left(\pm333103 \sqrt{249}+5256379\right)}{170170875 g_1^3},\\
g_8=\frac{2}{105} \left(\pm11 \sqrt{249} g_1-183 g_1\right),\ g_9=\frac{4 \left(8321 \sqrt{249}\pm131253\right)}{231525 g_1}.
\end{gather*}
Notice that the role of the parameter $g_1$ is apparent, as it originates from the one-dimensional group of diagonal Lie algebra automorphisms $\ker e^{M_\Delta}$, namely
\[\diag\left(k,k^2,\frac{1}{k},\frac{1}{k^2},1,k^3,\frac{1}{k^3},k,\frac{1}{k}\right).\]
\end{example}

\begin{remark}
The classification of nice nilpotent Lie algebras contains continuous families, starting from dimension $7$. In dimension up to $8$, each Lie algebra in a continuous family satisfies the obstruction of Lemma~\ref{lemma:TracelessDerivations}, and therefore admits no metric satisfying \eqref{eqn:normalized_einstein}. In dimension $9$, there are both completely obstructed and completely unobstructed families.

Consider for example the family of Lie algebra \texttt{952:399} 
\[(0,0,0,0,e^{12},e^{34}, a_1 e^{13}+e^{24},e^{14}+e^{25}+e^{36},e^{15}+e^{23}+e^{46})\]
where $a_1\neq0$. The elements of this family share the same root matrix:
\[M_\Delta=\left(
\begin{array}{ccccccccc}
 -1 & -1 & 0 & 0 & 1 & 0 & 0 & 0 & 0 \\
 0 & 0 & -1 & -1 & 0 & 1 & 0 & 0 & 0 \\
 -1 & 0 & -1 & 0 & 0 & 0 & 1 & 0 & 0 \\
 0 & -1 & 0 & -1 & 0 & 0 & 1 & 0 & 0 \\
 -1 & 0 & 0 & -1 & 0 & 0 & 0 & 1 & 0 \\
 0 & -1 & 0 & 0 & -1 & 0 & 0 & 1 & 0 \\
 0 & 0 & -1 & 0 & 0 & -1 & 0 & 1 & 0 \\
 -1 & 0 & 0 & 0 & -1 & 0 & 0 & 0 & 1 \\
 0 & -1 & -1 & 0 & 0 & 0 & 0 & 0 & 1 \\
 0 & 0 & 0 & -1 & 0 & -1 & 0 & 0 & 1 \\
\end{array}
\right).\] It is easy to see that the vector 
\[\tran{X}=(-6, -6, 1 - x_{247}, x_{247}, 8, -3 - x_{247}, -4 + x_{247}, -4 + x_{247}, 8, -3 - x_{247})\]
satisfies $\tran{X}M_{\Delta}=(1,\dots,1)$, and the corresponding system of condition~\ref{thm:sigmacompatible:cond:dueimmagine} of Theorem~\ref{thm:sigmacompatible} is given by:
\begin{gather*}
\frac{g_5}{g_1 g_2}=-6,\ \frac{g_6}{g_3 g_4}=-6,\ \frac{a_1^2 g_7}{g_1 g_3}+\frac{g_7}{g_2 g_4}=1,\\
\frac{g_8}{g_3 g_6}+4=\frac{g_7}{g_2 g_4},\ \frac{g_7}{g_4 g_2}+\frac{g_8}{g_5 g_2}+3=0,\ \frac{g_8}{g_1 g_4}=8,\\
\frac{g_7}{g_2 g_4}+\frac{g_9}{g_4 g_6}+3=0,\ \frac{g_9}{g_1 g_5}+4=\frac{g_7}{g_2 g_4},\ \frac{g_9}{g_2 g_3}=8.
\end{gather*}
After an easy computation, one obtains:
\begin{gather*}
g_2=\frac{4 g_1}{21 g_1-4},\ g_3=g_1,\ g_4=\frac{4 g_1}{21 g_1-4},\\
g_5=\frac{24 g_1^2}{4-21 g_1},\ g_6=\frac{24 g_1^2}{4-21 g_1},\ g_7=\frac{64 g_1 (3 g_1-1)}{3 (4-21 g_1)^2},\\
g_8=\frac{32 g_1^2}{21 g_1-4},\ g_9=\frac{32 g_1^2}{21 g_1-4},
\end{gather*}
where $a_1$ and $g_1$ must satisfy:
\[\frac{16 \left(a_1^2 (16-48 g_1)+3 g_1 (4-21 g_1)^2\right)}{(21 g_1-4)^3}=4.\]
We note that for any nonzero value of the parameter $a_1$  there exists a real solution with $\frac{1}{3}<g_1\leq\frac{4}{9}$. Summing up, we obtain a continuous family of Einstein Lie algebras with the same diagram $\Delta$.
\end{remark}

A more tractable class for a classification is the class of nice Lie algebras with surjective root matrix $M_\Delta$. With the methods of Theorem~\ref{thm:EinsteinMetrics8}, we obtain the following:
\begin{theorem}
\label{thm:EinsteinMetrics9}
A nice nilpotent Lie algebra of dimension $9$ with surjective root matrix has a diagonal Einstein metric of positive scalar curvature if and only if it appears in Table~\ref{table:9DimSurjectiveNormal}; it has a diagram involution $\sigma$ and a $\sigma$-diagonal Einstein metric of positive scalar curvature if and only if it appears in Table~\ref{table:9DimSIGMA}. Both tables contain all metrics with this property up to rescaling.
\end{theorem}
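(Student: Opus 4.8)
The plan is to repeat the strategy of Theorem~\ref{thm:EinsteinMetrics8}, taking advantage of the rigidity that surjectivity of the root matrix imposes on the underlying linear algebra. Starting from the classification of $9$-dimensional nice nilpotent Lie algebras in \cite{ContiRossi:Construction}, I would keep only the diagrams $\Delta$ whose root matrix $M_\Delta$ is surjective over $\R$. For such a $\Delta$ the transpose $\tran{M_\Delta}$ is injective, so the linear system \ref{eqn:linear_system}, namely $\tran{M_\Delta}X=[1]$, has at most one solution; by Lemma~\ref{lemma:TracelessDerivations} it has exactly one precisely when all derivations of the Lie algebras with diagram $\Delta$ are traceless, and I would discard the diagrams for which no solution exists.

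The point is that, for a diagram with surjective root matrix, the existence of a diagonal metric satisfying \eqref{eqn:normalized_einstein} depends only on $\Delta$. By Corollary~\ref{cor:necessary_condition}, conditions~\ref{eqn:linear_system}, \ref{necessarycondition2} and \ref{necessarycondition3} are necessary, and they are formulated purely in terms of the unique vector $X=(x_I)$ and of $M_{\Delta,2}$. Conversely, these conditions are sufficient here: surjectivity of $M_\Delta$ over $\R$ means that $e^{M_\Delta}$ maps the positive diagonal matrices onto the positive diagonal matrices, so the magnitude part of~\eqref{eqn:polynomial_eqns} can always be solved, while \ref{necessarycondition3} supplies a sign vector realising $\logsign X$ in the image of $M_{\Delta,2}$; combining the two produces the required $g\in D_n$. (When $M_{\Delta,2}$ is itself surjective this is exactly Theorem~\ref{thm:Sufficient_Condition}.) So the classification of diagonal Einstein metrics reduces to checking \ref{eqn:linear_system}--\ref{necessarycondition3} on each surviving diagram.

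For each diagram that passes these tests I would list the Lie algebras carrying it --- there are no continuous families, since $e^{M_\Delta}(D_n)$ rescales the nonzero structure constants by arbitrary positive factors and flips their signs by the patterns in the image of $M_{\Delta,2}$, so the inequivalent Lie algebras with diagram $\Delta$ form the finite set $\coker M_{\Delta,2}$ modulo $\Aut(\Delta)$ --- and, for a chosen representative, write down the metric explicitly by solving~\eqref{eqn:polynomial_eqns}, equivalently by normalising the nice basis so that the structure constants are $\pm\sqrt{\abs{x_I}}$, as in Example~\ref{ex:842:117VariConst}; recording the signatures produces Table~\ref{table:9DimSurjectiveNormal}. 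The $\sigma$-diagonal case is handled in parallel: for each such diagram and each involution $\sigma\in\Aut(\Delta)$ (with $\sigma=\id$ recovering the diagonal table), the unique $X$ is automatically $\sigma$-invariant because $\sigma M_\Delta\sigma=M_\Delta$, so condition~\ref{thm:sigmacompatible:cond:unoeqlineare} of Theorem~\ref{thm:sigmacompatible} holds, and what remains is to solve conditions~\ref{thm:sigmacompatible:cond:dueimmagine} and~\ref{thm:sigmacompatible:cond:tresigma}, i.e. to find a $\sigma$-invariant $g\in D_n$ with $\bigl(x_I/(c_I\tilde c_I)\bigr)=e^{M_\Delta}(g)$ --- a polynomial system in the $\sigma$-invariant unknowns whose real solubility is decided case by case exactly as in Example~\ref{ex:8dimSIGMA}. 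The pairs $(\g,\sigma)$ that survive, together with the components of the invariant Einstein metric, fill Table~\ref{table:9DimSIGMA}, and in each table the ``only if'' direction is the observation that every Lie algebra, respectively pair, not listed fails one of the relevant conditions.

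The main obstacle is computational rather than conceptual: the list of $9$-dimensional nice diagrams with surjective root matrix and traceless derivations is sizable, and for each of them one must solve a linear system over $\R$ and over $\Z_2$, enumerate the sign classes up to $\Aut(\Delta)$, and, in the $\sigma$-diagonal case, certify the presence or absence of real solutions of a polynomial system; the proof is therefore an exhaustive, computer-assisted case analysis, and the delicate points are the completeness of the enumeration of diagrams and involutions and the correct bookkeeping of the sign condition~\ref{necessarycondition3} against the image of $M_{\Delta,2}$.
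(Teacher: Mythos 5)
Your proposal is correct and takes essentially the same route as the paper, whose proof of Theorem~\ref{thm:EinsteinMetrics9} is exactly the computer-assisted case analysis you describe: restrict the classification of \cite{ContiRossi:Construction} to surjective root matrices, filter by Lemma~\ref{lemma:TracelessDerivations} and conditions~\ref{eqn:linear_system}--\ref{necessarycondition3}, then solve \eqref{eqn:polynomial_eqns} (equivalently normalise the structure constants to $\pm\sqrt{\abs{x_I}}$ as in Example~\ref{ex:842:117VariConst}) and treat each diagram involution as in Example~\ref{ex:8dimSIGMA}. Your observation that over an $\R$-surjective root matrix conditions~\ref{eqn:linear_system}, \ref{necessarycondition2} and \ref{necessarycondition3} are already sufficient is a valid mild sharpening of Theorem~\ref{thm:Sufficient_Condition}, consistent with the paper's Table~\ref{9dimObstructedSurjective} and implicit in its computations.
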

\begin{remark}
By \cite[Proposition 2.2]{ContiRossi:Construction}, any nice Lie algebra with surjective $M_\Delta$ is equivalent to a nice Lie algebra with structure constants equal to $\pm1$ or $0$. In particular, all the Lie groups constructed in Theorem~\ref{thm:EinsteinMetrics9} admit a compact Einstein quotient.
\end{remark}

\FloatBarrier
\begin{footnotesize}
{\setlength{\tabcolsep}{2pt}
\begin{longtable}[c]{>{\ttfamily}c C C C}
\caption{Einstein diagonal metrics on $9$-dimensional nice nilpotent Lie algebras with surjective $M_{\Delta}$\label{table:9DimSurjectiveNormal}}\\
\toprule
\textnormal{Name} & \g &\text{Metric} &\text{Sign.} \\
\midrule
\endfirsthead
\multicolumn{4}{c}{\tablename\ \thetable\ -- \textit{Continued from previous page}} \\
\toprule
\textnormal{Name} & \g &\text{Metric} &\text{Sign.} \\
\midrule
\endhead
\bottomrule\\[-7pt]
\multicolumn{4}{c}{\tablename\ \thetable\ -- \textit{Continued to next page}} \\
\endfoot
\bottomrule\\[-7pt]
\endlastfoot
9641:108a &\multicolumn{1}{L}{ 0,0,0,} &(++++-+-++) & (7,2)\\*
&\sqrt{\frac{11}{2}}e^{12},\sqrt{21}e^{13},3 \sqrt{2}e^{14},& (++-+++--+)& (6,3)\\*
&3 \sqrt{\frac{3}{2}}e^{24},\sqrt{\frac{41}{2}}e^{15}+\sqrt{\frac{43}{2}}e^{23},& (-++-++++-)&(6,3)\\*
&\multicolumn{1}{R}{\sqrt{17}e^{16}+\sqrt{\frac{29}{2}}e^{27}+\sqrt{\frac{3}{2}}e^{35}} & (-+---++--)&(3,6)\\[8pt]
9641:108b & \multicolumn{1}{L}{0,0,0,}& (++++-+-++)&(7,2)\\*
&\sqrt{\frac{11}{2}}e^{12},\sqrt{21}e^{13},-3 \sqrt{2}e^{14},& (++-+++--+)&(6,3)\\*
&3 \sqrt{\frac{3}{2}}e^{24},\sqrt{\frac{41}{2}}e^{15}+\sqrt{\frac{43}{2}}e^{23},& (-++-++++-) & (6,3)\\*
&\multicolumn{1}{R}{\sqrt{17}e^{16}+\sqrt{\frac{29}{2}}e^{27}+\sqrt{\frac{3}{2}}e^{35}} & (-+---++--) & (3,6)\\[8pt]
9641:108c & \multicolumn{1}{L}{0,0,0,} & (++++-+-++)&(7,2)\\*
&-\sqrt{\frac{11}{2}}e^{12},\sqrt{21}e^{13},3 \sqrt{2}e^{14},& (++-+++--+)&(6,3)\\*
&3 \sqrt{\frac{3}{2}}e^{24},\sqrt{\frac{41}{2}}e^{15}+\sqrt{\frac{43}{2}}e^{23},& (-++-++++-)&(6,3)\\*
&\multicolumn{1}{R}{\sqrt{17}e^{16}+\sqrt{\frac{29}{2}}e^{27}+\sqrt{\frac{3}{2}}e^{35}} & (-+---++--)&(3,6)\\[8pt]
9641:108d & \multicolumn{1}{L}{0,0,0,} & (++++-+-++)&(7,2)\\*
&-\sqrt{\frac{11}{2}}e^{12},\sqrt{21}e^{13},-3 \sqrt{2}e^{14},& (++-+++--+)& (6,3)\\*
&3 \sqrt{\frac{3}{2}}e^{24},\sqrt{\frac{41}{2}}e^{15}+\sqrt{\frac{43}{2}}e^{23},& (-++-++++-)&(6,3)\\*
&\multicolumn{1}{R}{\sqrt{17}e^{16}+\sqrt{\frac{29}{2}}e^{27}+\sqrt{\frac{3}{2}}e^{35}} & (-+---++--)&(3,6)\\[8pt]
9641:161 & \multicolumn{1}{L}{0,0,0,2 \sqrt{2}e^{12},\sqrt{\frac{14}{3}}e^{13},\sqrt{\frac{2}{3}}e^{24},\sqrt{7}e^{35},} & (+-++++--+)&(6,3)\\*
&\multicolumn{1}{R}{\frac{5}{\sqrt{3}}e^{14}+2 \sqrt{\frac{7}{3}}e^{23},4 \sqrt{\frac{2}{3}}e^{15}+\sqrt{\frac{5}{3}}e^{26}+2 \sqrt{2}e^{37}} & \\[8pt]
9631:66 & \multicolumn{1}{L}{0,0,0,4e^{12},\sqrt{\frac{77}{3}}e^{13},\sqrt{\frac{11}{3}}e^{23},\sqrt{17}e^{14},} & (+--+----+)&(3,6)\\*
&\multicolumn{1}{R}{\sqrt{\frac{73}{3}}e^{15}+\sqrt{\frac{70}{3}}e^{36},3 \sqrt{2}e^{17}+2 \sqrt{\frac{14}{3}}e^{26}+\frac{1}{\sqrt{3}}e^{35}} & \\[8pt]
9631:67 & \multicolumn{1}{L}{0,0,0,2 \sqrt{\frac{2}{3}}e^{12},\sqrt{\frac{77}{3}}e^{13},\sqrt{\frac{67}{3}}e^{23},} & (+++++--++)&(7,2)\\*
&\multicolumn{1}{C}{\sqrt{17}e^{14},\sqrt{\frac{17}{3}}e^{15}+2 \sqrt{\frac{14}{3}}e^{24}+\sqrt{\frac{70}{3}}e^{36},} & & \\*
&\multicolumn{1}{R}{3 \sqrt{2}e^{17}+\sqrt{19}e^{35}} & & \\[8pt]
9631:79 & \multicolumn{1}{L}{0,0,0,\frac{7}{\sqrt{3}}e^{12},\sqrt{\frac{10}{3}}e^{13},\sqrt{\frac{77}{3}}e^{23},\sqrt{17}e^{24},} & (+++--+++-)&(6,3)\\*
&\multicolumn{1}{R}{\frac{1}{\sqrt{3}}e^{14}+\sqrt{\frac{74}{3}}e^{26}+\sqrt{\frac{70}{3}}e^{35},\sqrt{19}e^{15}+3 \sqrt{2}e^{27}} & \\[8pt]
9631:85 & \multicolumn{1}{L}{0,0,0,\frac{5}{\sqrt{3}}e^{12},2 \sqrt{7}e^{13},\sqrt{\frac{77}{3}}e^{23},\sqrt{17}e^{24},} & (++++-+-++)&(7,2)\\*
&\multicolumn{1}{R}{\sqrt{\frac{73}{3}}e^{14}+\sqrt{\frac{70}{3}}e^{35},\sqrt{\frac{17}{3}}e^{15}+3 \sqrt{2}e^{27}+\sqrt{\frac{74}{3}}e^{36}} & \\[8pt]
9631:90 & \multicolumn{1}{L}{0,0,0,\sqrt{\frac{43}{3}}e^{12},2 \sqrt{7}e^{13},\sqrt{\frac{59}{3}}e^{23},} & (++++-+-++)&(7,2)\\*
&\multicolumn{1}{C}{\sqrt{17}e^{24},\sqrt{\frac{53}{3}}e^{15}+2 \sqrt{\frac{14}{3}}e^{36},} & \\*
&\multicolumn{1}{R}{\sqrt{\frac{91}{3}}e^{14}+3 \sqrt{2}e^{27}+\sqrt{\frac{34}{3}}e^{35}} & \\[8pt]
9631:96a & \multicolumn{1}{L}{0,0,0,\sqrt{5}e^{12},\sqrt{14}e^{13},\sqrt{15}e^{23},\sqrt{17}e^{24},} & (++++-+-++)&(7,2)\\*
&\multicolumn{1}{R}{\sqrt{13}e^{15}+\sqrt{14}e^{26},\sqrt{21}e^{14}+3 \sqrt{2}e^{27}+\sqrt{2}e^{35}} & (++-++---+)&(5,4)\\[8pt]
9631:96b & \multicolumn{1}{L}{0,0,0,\sqrt{5}e^{12},-\sqrt{14}e^{13},\sqrt{15}e^{23},\sqrt{17}e^{24},} & (++++-+-++)&(7,2)\\*
&\multicolumn{1}{R}{\sqrt{13}e^{15}+\sqrt{14}e^{26},\sqrt{21}e^{14}+3 \sqrt{2}e^{27}+\sqrt{2}e^{35}} & (++-++---+)&(5,4)\\[8pt]
9531:331 & \multicolumn{1}{L}{0,0,0,0,\sqrt{\frac{59}{3}}e^{12},\sqrt{\frac{77}{3}}e^{13},} & (+++--+++-)&(6,3)\\*
&\multicolumn{1}{C}{\sqrt{17}e^{15},2 \sqrt{7}e^{16}+\sqrt{\frac{11}{3}}e^{25}+\sqrt{\frac{70}{3}}e^{34},} & &\\*
&\multicolumn{1}{R}{3 \sqrt{2}e^{17}+\sqrt{\frac{67}{3}}e^{24}+\sqrt{\frac{10}{3}}e^{36}} & &\\[8pt]
9531:386 & \multicolumn{1}{L}{0,0,0,0,\frac{8}{\sqrt{15}}e^{12},\sqrt{\frac{77}{15}}e^{34},} & (++-+-+++-)&(6,3)\\*
& \multicolumn{1}{C}{2 \sqrt{\frac{2}{5}}e^{15},4 \sqrt{\frac{7}{15}}e^{14}+\sqrt{\frac{11}{3}}e^{25}+\sqrt{\frac{14}{5}}e^{36},} & &\\*
&\multicolumn{1}{R}{\sqrt{\frac{13}{5}}e^{17}+2 \sqrt{\frac{26}{15}}e^{23}+\sqrt{\frac{10}{3}}e^{46}} &\\[8pt]
953:168a & \multicolumn{1}{L}{0,0,0,0,\sqrt{\frac{35}{2}}e^{12},\sqrt{\frac{11}{2}}e^{13},\sqrt{17}e^{15}+3 \sqrt{2}e^{23},} & (+++--++-+)&(6,3)\\*
&\multicolumn{1}{R}{\frac{7}{2}e^{14}+\frac{3 \sqrt{5}}{2}e^{36},\frac{3 \sqrt{7}}{2}e^{16}+\sqrt{\frac{3}{2}}e^{25}+\frac{\sqrt{53}}{2}e^{34}} & (+-+-++--+)&(5,4)\\[8pt]
953:168b & \multicolumn{1}{L}{0,0,0,0,\sqrt{\frac{35}{2}}e^{12},\sqrt{\frac{11}{2}}e^{13},\sqrt{17}e^{15}+3 \sqrt{2}e^{23},} & (+++--++-+)&(6,3)\\*
&\multicolumn{1}{R}{-\frac{7}{2}e^{14}+\frac{3 \sqrt{5}}{2}e^{36},\frac{3 \sqrt{7}}{2}e^{16}+\sqrt{\frac{3}{2}}e^{25}+\frac{\sqrt{53}}{2}e^{34}} & (+-+-++--+)&(5,4)\\[8pt]
953:172 & \multicolumn{1}{L}{0,0,0,0,\sqrt{\frac{77}{3}}e^{12},\sqrt{\frac{41}{3}}e^{13},\sqrt{17}e^{15}+3 \sqrt{2}e^{23},} & (+++--++-+)&(6,3)\\*
&\multicolumn{1}{R}{\frac{7}{\sqrt{3}}e^{24}+\sqrt{\frac{46}{3}}e^{36},2 \sqrt{7}e^{16}+\sqrt{\frac{29}{3}}e^{25}+2 \sqrt{\frac{13}{3}}e^{34}} & \\[8pt]
953:234a & \multicolumn{1}{L}{0,0,0,0,\sqrt{6}e^{12},\sqrt{6}e^{34},\sqrt{\frac{11}{2}}e^{15}+\sqrt{\frac{13}{2}}e^{23},} & (++++--+++)&(7,2)\\*
&\multicolumn{1}{R}{\frac{\sqrt{13}}{2}e^{13}+\frac{3}{2}e^{46},\frac{\sqrt{29}}{2}e^{14}+\sqrt{\frac{3}{2}}e^{25}+\frac{\sqrt{19}}{2}e^{36}} & (+-+++--++)&(6,3)\\[8pt]
953:234b & \multicolumn{1}{L}{0,0,0,0,\sqrt{6}e^{12},\sqrt{6}e^{34},\sqrt{\frac{11}{2}}e^{15}+\sqrt{\frac{13}{2}}e^{23},} & (++++--+++)&(7,2)\\*
&\multicolumn{1}{R}{-\frac{\sqrt{13}}{2}e^{13}+\frac{3}{2}e^{46},\frac{\sqrt{29}}{2}e^{14}+\sqrt{\frac{3}{2}}e^{25}+\frac{\sqrt{19}}{2}e^{36}} & (+-+++--++)&(6,3)\\[8pt]
953:235a & \multicolumn{1}{L}{0,0,0,0,\sqrt{6}e^{12},\sqrt{6}e^{34},} & (++++--+++)&(7,2)\\*
&\multicolumn{1}{C}{\sqrt{\frac{11}{2}}e^{15}+\sqrt{\frac{13}{2}}e^{23},} & (+++--++-+)&(6,3)\\*
&\multicolumn{1}{C}{\sqrt{\frac{13}{2}}e^{14}+\sqrt{\frac{11}{2}}e^{36},} & (+-+++--++)&(6,3)\\*
&\multicolumn{1}{R}{2e^{13}+\sqrt{\frac{3}{2}}e^{25}+\sqrt{\frac{3}{2}}e^{46}} & (+-+-++--+)&(5,4)\\[8pt]
953:235b & \multicolumn{1}{L}{0,0,0,0,\sqrt{6}e^{12},\sqrt{6}e^{34},} & (++++--+++)&(7,2)\\*
&\multicolumn{1}{C}{\sqrt{\frac{11}{2}}e^{15}+\sqrt{\frac{13}{2}}e^{23},} & (+++--++-+)&(6,3)\\*
&\multicolumn{1}{C}{-\sqrt{\frac{13}{2}}e^{14}+\sqrt{\frac{11}{2}}e^{36},} & (+-+++--++)&(6,3)\\*
&\multicolumn{1}{R}{2e^{13}+\sqrt{\frac{3}{2}}e^{25}+\sqrt{\frac{3}{2}}e^{46}} & (+-+-++--+)&(5,4)\\[8pt]
953:235c & \multicolumn{1}{L}{0,0,0,0,\sqrt{6}e^{12},\sqrt{6}e^{34},} & (++++--+++)&(7,2)\\*
&\multicolumn{1}{C}{-\sqrt{\frac{11}{2}}e^{15}+\sqrt{\frac{13}{2}}e^{23},} & (+++--++-+)&(6,3)\\*
&\multicolumn{1}{C}{-\sqrt{\frac{13}{2}}e^{14}+\sqrt{\frac{11}{2}}e^{36},} & (+-+++--++)&(6,3)\\*
&\multicolumn{1}{R}{2e^{13}+\sqrt{\frac{3}{2}}e^{25}+\sqrt{\frac{3}{2}}e^{46}} & (+-+-++--+)&(5,4)\\[8pt]
9521:70a & \multicolumn{1}{L}{0,0,0,0,\sqrt{5}e^{12},} & (+++-+-+-+)&(6,3)\\*
&\multicolumn{1}{C}{\sqrt{\frac{29}{2}}e^{14}+3 \sqrt{\frac{3}{2}}e^{23},} & (++-+++--+)&(6,3)\\*
&\multicolumn{1}{C}{\sqrt{\frac{29}{2}}e^{13}+3 \sqrt{\frac{3}{2}}e^{24},} & (-+++-----)&(3,6)\\*
&\multicolumn{1}{R}{\sqrt{17}e^{15},3 \sqrt{2}e^{18}+\sqrt{21}e^{25}+\sqrt{2}e^{34}} & (-+---++--)&(3,6)\\[8pt]
9521:70b & \multicolumn{1}{L}{0,0,0,0,\sqrt{5}e^{12},} & (+++-+-+-+)&(6,3)\\*
&\multicolumn{1}{C}{-\sqrt{\frac{29}{2}}e^{14}+3 \sqrt{\frac{3}{2}}e^{23},} & (++-+++--+)&(6,3)\\*
&\multicolumn{1}{C}{\sqrt{\frac{29}{2}}e^{13}+3 \sqrt{\frac{3}{2}}e^{24},} & (-+++-----)&(3,6)\\*
&\multicolumn{1}{R}{\sqrt{17}e^{15},3 \sqrt{2}e^{18}+\sqrt{21}e^{25}+\sqrt{2}e^{34}} & (-+---++--)&(3,6)\\[8pt]
9521:70c & \multicolumn{1}{L}{0,0,0,0,-\sqrt{5}e^{12},} & (+++-+-+-+)&(6,3)\\*
&\multicolumn{1}{C}{-\sqrt{\frac{29}{2}}e^{14}+3 \sqrt{\frac{3}{2}}e^{23},} & (++-+++--+)&(6,3)\\*
&\multicolumn{1}{C}{\sqrt{\frac{29}{2}}e^{13}+3 \sqrt{\frac{3}{2}}e^{24},} & (-+++-----)&(3,6)\\*
&\multicolumn{1}{R}{\sqrt{17}e^{15},3 \sqrt{2}e^{18}+\sqrt{21}e^{25}+\sqrt{2}e^{34}} & (-+---++--)&(3,6)\\[8pt]
952:240 & \multicolumn{1}{L}{0,0,0,0,2 \sqrt{2}e^{12},2 \sqrt{5}e^{13},\sqrt{17}e^{14}+3 \sqrt{2}e^{23},} & (+++-+-+-+)&(6,3)\\*
&\multicolumn{1}{R}{\sqrt{21}e^{25}+\sqrt{22}e^{34},2 \sqrt{7}e^{15}+\sqrt{6}e^{24}+\sqrt{21}e^{36}} & \\[8pt]
952:246 & \multicolumn{1}{L}{0,0,0,0,\sqrt{14}e^{12},\sqrt{2}e^{13},\sqrt{17}e^{14}+3 \sqrt{2}e^{23},} & (+++--+++-)&(6,3)\\*
&\multicolumn{1}{R}{\sqrt{22}e^{16}+\sqrt{21}e^{25},\sqrt{6}e^{15}+4e^{24}+\sqrt{21}e^{36}} & \\[8pt]
952:355 & \multicolumn{1}{L}{0,0,0,0,\sqrt{6}e^{12},\sqrt{6}e^{34},2e^{13}+\sqrt{3}e^{24},} & (+++--+++-)&(6,3)\\*
&\multicolumn{1}{R}{\sqrt{7}e^{15}+2 \sqrt{2}e^{23},2 \sqrt{2}e^{14}+\sqrt{7}e^{36}} & (+-+++-+-+)&(6,3)\\[8pt]
952:394 & \multicolumn{1}{L}{0,0,0,0,\sqrt{6}e^{12},\sqrt{6}e^{34},e^{13},} & (++++--+++)&(7,2)\\*
&\multicolumn{1}{R}{2 \sqrt{2}e^{14}+\sqrt{3}e^{25}+2e^{36},2e^{15}+2 \sqrt{2}e^{23}+\sqrt{3}e^{46}} & &\\[8pt]
952:724 & \multicolumn{1}{L}{0,0,0,0,2 \sqrt{5}e^{12},\sqrt{17}e^{13},\sqrt{14}e^{14},} & (++-+++--+)&(6,3)\\*
&\multicolumn{1}{R}{3 \sqrt{2}e^{16}+3e^{25}+2 \sqrt{7}e^{34},2 \sqrt{7}e^{15}+2 \sqrt{3}e^{23}+\sqrt{15}e^{47}} & &\\[8pt]
952:725a & \multicolumn{1}{L}{0,0,0,0,} & (++++---++)&(6,3)\\*
&\multicolumn{1}{C}{\sqrt{3}e^{12},2e^{13},2e^{24},} & (+-+++-++-)&(6,3)\\*
&\multicolumn{1}{C}{2 \sqrt{2}e^{14}+\sqrt{2}e^{25}+\sqrt{5}e^{36},} & (-+++++--+)&(6,3)\\*
&\multicolumn{1}{R}{\sqrt{2}e^{15}+2 \sqrt{2}e^{23}+\sqrt{5}e^{47}} & (--++-++--)&(4,5)\\[8pt]
952:725b & \multicolumn{1}{L}{0,0,0,0,} & (++++---++)&(6,3)\\*
&\multicolumn{1}{C}{\sqrt{3}e^{12},-2e^{13},2e^{24},} & (+-+++-++-)&(6,3)\\*
&\multicolumn{1}{C}{2 \sqrt{2}e^{14}+\sqrt{2}e^{25}+\sqrt{5}e^{36},} & (-+++++--+)&(6,3)\\*
&\multicolumn{1}{R}{\sqrt{2}e^{15}+2 \sqrt{2}e^{23}+\sqrt{5}e^{47}} & (--++-++--)&(4,5)\\[8pt]
952:725c & \multicolumn{1}{L}{0,0,0,0,} & (++++---++)&(6,3)\\*
&\multicolumn{1}{C}{-\sqrt{3}e^{12},-2^{13},2e^{24},} & (+-+++-++-)&(6,3)\\*
&\multicolumn{1}{C}{2 \sqrt{2}e^{14}+\sqrt{2}e^{25}+\sqrt{5}e^{36},} & (-+++++--+)&(6,3)\\*
&\multicolumn{1}{R}{\sqrt{2}e^{15}+2 \sqrt{2}e^{23}+\sqrt{5}e^{47}} & (--++-++--)&(4,5)\\[8pt]
952:727a & \multicolumn{1}{L}{0,0,0,0,\sqrt{\frac{33}{2}}e^{12},\sqrt{17}e^{13},\sqrt{\frac{21}{2}}e^{24},} & (++-+++--+)&(6,3)\\*
&\multicolumn{1}{C}{3 \sqrt{2}e^{16}+\sqrt{2}e^{25}+\sqrt{21}e^{34},} & (+--+-++--)&(4,5)\\*
&\multicolumn{1}{R}{\sqrt{\frac{35}{2}}e^{15}+\sqrt{5}e^{23}+\sqrt{\frac{23}{2}}e^{47}} & & \\[8pt]
952:727b & \multicolumn{1}{L}{0,0,0,0,-\sqrt{\frac{33}{2}}e^{12},\sqrt{17}e^{13},\sqrt{\frac{21}{2}}e^{24},} & (++-+++--+)&(6,3)\\*
&\multicolumn{1}{C}{3 \sqrt{2}e^{16}+\sqrt{2}e^{25}+\sqrt{21}e^{34},} & (+--+-++--)&(4,5)\\*
&\multicolumn{1}{R}{\sqrt{\frac{35}{2}}e^{15}+\sqrt{5}e^{23}+\sqrt{\frac{23}{2}}e^{47}} & & \\[8pt]
952:730a & \multicolumn{1}{L}{0,0,0,0,\sqrt{5}e^{12},\sqrt{2}e^{13},2e^{34},} & (++++---++)&(6,3)\\*
&\multicolumn{1}{R}{2 \sqrt{2}e^{14}+2e^{25}+\sqrt{3}e^{36},\sqrt{2}e^{15}+2 \sqrt{2}e^{23}+\sqrt{5}e^{47}} & (-+++++--+)&(6,3) \\[8pt]
952:730b & \multicolumn{1}{L}{0,0,0,0,-\sqrt{5}e^{12},\sqrt{2}e^{13},2e^{34},} & (++++---++)&(6,3)\\*
&\multicolumn{1}{R}{2 \sqrt{2}e^{14}+2e^{25}+\sqrt{3}e^{36},\sqrt{2}e^{15}+2 \sqrt{2}e^{23}+\sqrt{5}e^{47}} & (-+++++--+)&(6,3) \\[8pt]
952:737 & \multicolumn{1}{L}{0,0,0,0,\sqrt{\frac{77}{3}}e^{12},\sqrt{\frac{59}{3}}e^{13},\sqrt{17}e^{24},} & (-+-+----+)&(3,6)\\*
&\multicolumn{1}{R}{\sqrt{\frac{70}{3}}e^{16}+\frac{5}{\sqrt{3}}e^{25}+4e^{34},\frac{7}{\sqrt{3}}e^{15}+3 \sqrt{2}e^{27}+2 \sqrt{\frac{2}{3}}e^{36}} & & \\[8pt]
952:738a & \multicolumn{1}{L}{0,0,0,0,\sqrt{11}e^{12},2 \sqrt{2}e^{13},2 \sqrt{2}e^{14},} & (+-++++--+)&(6,3)\\*
&\multicolumn{1}{R}{\sqrt{6}e^{15}+4e^{24}+3e^{36},4e^{16}+\sqrt{6}e^{25}+3e^{47}} & (-+-+++++-)&(6,3) \\[8pt]
952:738b & \multicolumn{1}{L}{0,0,0,0,-\sqrt{11}e^{12},2 \sqrt{2}e^{13},2 \sqrt{2}e^{14},} & (+-++++--+)&(6,3)\\*
&\multicolumn{1}{R}{\sqrt{6}e^{15}+4e^{24}+3e^{36},4e^{16}+\sqrt{6}e^{25}+3e^{47}} & (-+-+++++-)&(6,3) \\[8pt]
952:744a & \multicolumn{1}{L}{0,0,0,0,4 \sqrt{\frac{2}{3}}e^{12},\sqrt{\frac{41}{3}}e^{13},2 \sqrt{2}e^{24},} & (++-+++--+)&(6,3)\\*
&\multicolumn{1}{R}{\sqrt{\frac{34}{3}}e^{16}+\sqrt{\frac{11}{3}}e^{25}+4e^{34},2 \sqrt{\frac{10}{3}}e^{15}+\sqrt{\frac{10}{3}}e^{36}+3e^{47}} & (--++++++-)&(6,3) \\[8pt]
952:744b & \multicolumn{1}{L}{0,0,0,0,-4 \sqrt{\frac{2}{3}}e^{12},\sqrt{\frac{41}{3}}e^{13},2 \sqrt{2}e^{24},} & (++-+++--+)&(6,3)\\*
&\multicolumn{1}{R}{\sqrt{\frac{34}{3}}e^{16}+\sqrt{\frac{11}{3}}e^{25}+4e^{34},2 \sqrt{\frac{10}{3}}e^{15}+\sqrt{\frac{10}{3}}e^{36}+3e^{47}} & (--++++++-)&(6,3) \\[8pt]
952:750 & \multicolumn{1}{L}{0,0,0,0,\sqrt{22}e^{12},\sqrt{14}e^{34},5e^{13},} & (--++----+)&(3,6)\\*
&\multicolumn{1}{R}{2 \sqrt{7}e^{15}+4e^{23}+\sqrt{13}e^{46},2 \sqrt{6}e^{17}+\sqrt{5}e^{25}+2 \sqrt{7}e^{36}} & & \\[8pt]
952:753a & \multicolumn{1}{L}{0,0,0,0,\sqrt{\frac{41}{3}}e^{12},\sqrt{\frac{59}{6}}e^{34},} & (++++--+++)&(7,2)\\*
&\multicolumn{1}{C}{\frac{5}{\sqrt{2}}e^{13},\sqrt{\frac{34}{3}}e^{15}+4e^{24}+\sqrt{\frac{11}{3}}e^{36},} & (-+-+++++-)&(6,3)\\*
&\multicolumn{1}{R}{\sqrt{\frac{23}{2}}e^{17}+\sqrt{\frac{10}{3}}e^{25}+\sqrt{\frac{43}{6}}e^{46}} & &\\[8pt]
952:753b & \multicolumn{1}{L}{0,0,0,0,\sqrt{\frac{41}{3}}e^{12},\sqrt{\frac{59}{6}}e^{34},} & (++++--+++)&(7,2)\\*
&\multicolumn{1}{C}{-\frac{5}{\sqrt{2}}e^{13},\sqrt{\frac{34}{3}}e^{15}+4e^{24}+\sqrt{\frac{11}{3}}e^{36},} & (-+-+++++-)&(6,3)\\*
&\multicolumn{1}{R}{\sqrt{\frac{23}{2}}e^{17}+\sqrt{\frac{10}{3}}e^{25}+\sqrt{\frac{43}{6}}e^{46}} & &\\[8pt]
942:110a & \multicolumn{1}{L}{0,0,0,0,0,2 \sqrt{3}e^{12},\sqrt{\frac{19}{2}}e^{13}+\sqrt{\frac{17}{2}}e^{45},} & (++-++--++)&(6,3)\\*
&\multicolumn{1}{C}{\sqrt{\frac{29}{3}}e^{16}+\sqrt{\frac{43}{3}}e^{25}+\sqrt{\frac{11}{3}}e^{34},} & (--+++----)&(3,6)\\*
&\multicolumn{1}{R}{\sqrt{\frac{67}{6}}e^{14}+\sqrt{\frac{10}{3}}e^{26}+\sqrt{\frac{41}{6}}e^{35}} & &\\[8pt]
942:110b & \multicolumn{1}{L}{0,0,0,0,0,2 \sqrt{3}e^{12},-\sqrt{\frac{19}{2}}e^{13}+\sqrt{\frac{17}{2}}e^{45},} & (++-++--++)&(6,3)\\*
&\multicolumn{1}{C}{\sqrt{\frac{29}{3}}e^{16}+\sqrt{\frac{43}{3}}e^{25}+\sqrt{\frac{11}{3}}e^{34},} & (--+++----)&(3,6)\\*
&\multicolumn{1}{R}{\sqrt{\frac{67}{6}}e^{14}+\sqrt{\frac{10}{3}}e^{26}+\sqrt{\frac{41}{6}}e^{35}} & &\\[8pt]
942:112 & \multicolumn{1}{L}{0,0,0,0,0,2 \sqrt{3}e^{12},3 \sqrt{2}e^{13}+\sqrt{17}e^{24},} & (-+--+++++)&(6,3)\\*
&\multicolumn{1}{C}{\sqrt{21}e^{16}+2 \sqrt{5}e^{25}+\sqrt{2}e^{34},} & &\\*
&\multicolumn{1}{R}{\sqrt{14}e^{14}+2 \sqrt{2}e^{26}+\sqrt{21}e^{35}} & &\\*
\end{longtable}
}
\end{footnotesize}

\FloatBarrier
\begin{footnotesize}
{\setlength{\tabcolsep}{2pt}
\begin{longtable}[c]{>{\ttfamily}c C C}
\caption{
$\sigma$-diagonal Einstein metrics on $9$-dimensional nice nilpotent Lie algebras with surjective $M_{\Delta}$\label{table:9DimSIGMA}}\\
\toprule
\textnormal{Name} & \multicolumn{2}{C}{\g}\\
$\sigma$ & \text{Metric} &\text{Sign.} \\
\midrule
\endfirsthead
\multicolumn{3}{c}{\tablename\ \thetable\ -- \textit{Continued from previous page}} \\
\toprule
\textnormal{Name} & \multicolumn{2}{C}{\g}\\
$\sigma$ & \text{Metric} &\text{Sign.} \\
\midrule
\endhead
\bottomrule\\[-7pt]
\multicolumn{3}{c}{\tablename\ \thetable\ -- \textit{Continued to next page}} \\
\endfoot
\bottomrule\\[-7pt]
\endlastfoot
\texttt{953:235a} & \multicolumn{2}{C}{0,0,0,0,e^{12},e^{34},e^{15}+e^{23},e^{14}+e^{36},e^{13}+e^{25}+e^{46}} \\*
\textnormal{(1,3)(2,4)(5,6)(7,8)} & (g_{13},g_{24}, g_{56},g_{78},g_9) &\\*
&\left(-\frac{13}{66},-\frac{1}{3}\sqrt{\frac{26}{33}},-\frac{13}{33} \sqrt{\frac{26}{33}},-\frac{169}{198} \sqrt{\frac{13}{66}},-\frac{169}{1089}\right)& (4,5)\\*[2pt]
&\left(-\frac{13}{66},\frac{1}{3}\sqrt{\frac{26}{33}},\frac{13}{33}\sqrt{\frac{26}{33}},\frac{169}{198} \sqrt{\frac{13}{66}},-\frac{169}{1089}\right)& (4,5)\\[10pt]
\texttt{953:235c} & \multicolumn{2}{C}{0,0,0,0,e^{12},e^{34},-e^{15}+e^{23},-e^{14}+e^{36},e^{13}+e^{25}+e^{46}} \\*
\textnormal{(1,3)(2,4)(5,6)(7,8)} & (g_{13},g_{24}, g_{56},g_{78},g_9) &\\*
&\left(-\frac{13}{66},-\frac{1}{3}\sqrt{\frac{26}{33}},-\frac{13}{33} \sqrt{\frac{26}{33}},\frac{169}{198} \sqrt{\frac{13}{66}},-\frac{169}{1089}\right) & (4,5)\\*[2pt]
&\left(-\frac{13}{66},\frac{1}{3}\sqrt{\frac{26}{33}},\frac{13}{33} \sqrt{\frac{26}{33}},-\frac{169}{198} \sqrt{\frac{13}{66}},-\frac{169}{1089}\right)&(4,5)\\[10pt]
9521:70b & \multicolumn{2}{C}{0,0,0,0,e^{12},-e^{14}+e^{23},e^{24}+e^{13},e^{15},e^{18}+e^{25}+e^{34}} \\*
\textnormal{(3,4)(6,7)}& (g_{1},g_{2},g_{34}, g_{5},g_{67},g_{8},g_9) &\\*
\multicolumn{2}{C}{\left(\frac{203}{2754},\frac{5887}{74358},-\frac{41209 \sqrt{\frac{145}{51}}}{446148},\frac{5975305}{204781932},\frac{242597383 \sqrt{\frac{145}{51}}}{2457383184},-\frac{1212986915}{33174672984},\frac{246236343745}{5075724966552}\right)} & (6,3)\\*[2pt]
\multicolumn{2}{C}{ \left(\frac{203}{2754},\frac{5887}{74358},\frac{41209 \sqrt{\frac{145}{51}}}{446148},\frac{5975305}{204781932},-\frac{242597383 \sqrt{\frac{145}{51}}}{2457383184},-\frac{1212986915}{33174672984},\frac{246236343745}{5075724966552}\right)}&(6,3)\\[10pt]
9521:70c & \multicolumn{2}{C}{0,0,0,0,- e^{12},-e^{14}+e^{23},e^{13}+e^{24},e^{15},e^{18}+e^{25}+e^{34}} \\*
\textnormal{(3,4)(6,7)}& (g_{1},g_{2},g_{34}, g_{5},g_{67},g_{8},g_9) &\\*
\multicolumn{2}{C}{\left(\frac{203}{2754},\frac{5887}{74358},-\frac{41209 \sqrt{\frac{145}{51}}}{446148},\frac{5975305}{204781932},\frac{242597383 \sqrt{\frac{145}{51}}}{2457383184},-\frac{1212986915}{33174672984},\frac{246236343745}{5075724966552}\right)} & (6,3)\\*[2pt]
\multicolumn{2}{C}{ \left(\frac{203}{2754},\frac{5887}{74358},\frac{41209 \sqrt{\frac{145}{51}}}{446148},\frac{5975305}{204781932},-\frac{242597383 \sqrt{\frac{145}{51}}}{2457383184},-\frac{1212986915}{33174672984},\frac{246236343745}{5075724966552}\right)}&(6,3)\\[10pt]
952:394 & \multicolumn{2}{C}{0,0,0,0,e^{12},e^{34},e^{13},e^{14}+e^{25}+e^{36},e^{15}+e^{23}+e^{46}} \\* 
\textnormal{(1,3)(2,4)(5,6)(8,9)}& (g_{13},g_{24},g_{56},g_7,g_{89}) &\\*
&\left(-\frac{1}{3},-\frac{4}{9},-\frac{8}{9},-\frac{1}{9},-\frac{32}{27}\right) & (4,5)\\[10pt]
952:725b & \multicolumn{2}{C}{0,0,0,0,e^{12},- e^{13},e^{24},e^{14}+e^{25}+e^{36},e^{15}+e^{23}+e^{47}} \\* 
\textnormal{(1,2)(3,4)(6,7)(8,9)}& (g_{12},g_{34},g_{5},g_{67},g_{89}) &\\*
&\left(-2 \sqrt{\frac{2}{15}},-\frac{2}{5},\frac{8}{5},\frac{16}{5} \sqrt{\frac{2}{15}},\frac{32}{5} \sqrt{\frac{2}{15}}\right) & (5,4)\\*[2pt]
&\left(2 \sqrt{\frac{2}{15}},-\frac{2}{5},\frac{8}{5},-\frac{16}{5} \sqrt{\frac{2}{15}},-\frac{32}{5} \sqrt{\frac{2}{15}}\right) & (5,4)\\[10pt]
952:725c & \multicolumn{2}{C}{0,0,0,0,-e^{12},- e^{13},e^{24},e^{14}+e^{25}+e^{36},e^{15}+e^{23}+e^{47}} \\* 
\textnormal{(1,2)(3,4)(6,7)(8,9)}& (g_{12},g_{34},g_{5},g_{67},g_{89}) &\\*
&\left(-2 \sqrt{\frac{2}{15}},-\frac{2}{5},\frac{8}{5},\frac{16}{5} \sqrt{\frac{2}{15}},\frac{32}{5} \sqrt{\frac{2}{15}}\right) & (5,4)\\*[2pt]
&\left(2 \sqrt{\frac{2}{15}},-\frac{2}{5},\frac{8}{5},-\frac{16}{5} \sqrt{\frac{2}{15}},-\frac{32}{5} \sqrt{\frac{2}{15}}\right) & (5,4)\\
\end{longtable}
}
\end{footnotesize}

\bibliographystyle{plain}
\bibliography{nice}

\small\noindent Dipartimento di Matematica e Applicazioni, Universit\`a di Milano Bicocca, via Cozzi 55, 20125 Milano, Italy.\\
\texttt{diego.conti@unimib.it}\\
\texttt{federico.rossi@unimib.it}

\end{document}